\documentclass{amsart}
\usepackage{graphicx}
\usepackage{amsmath}
\usepackage{amsthm}
\usepackage{mathtext}
\usepackage{amssymb}
\usepackage{tikz}
\oddsidemargin=5pt \evensidemargin=5pt \topmargin=0pt \textwidth=450pt \textheight=630pt
\vfuzz2pt 
\hfuzz2pt 
\newtheorem{thm}{Theorem}
\newtheorem*{thm*}{Theorem}
\newtheorem{cor}{Corollary}
\newtheorem{lem}{Lemma}
\newtheorem{prop}{Proposition}
\newtheorem*{con*}{Conjecture}
\theoremstyle{definition}
\newtheorem{defn}{Definition}
\newtheorem*{ex*}{Example}
\newtheorem*{cons*}{Construction}
\newtheorem{rem}{Remark}


\DeclareMathOperator{\conv}{conv}
\DeclareMathOperator{\supp}{supp}

\begin{document}

\title{Cubical realizations of flag nestohedra and Gal's conjecture.}%
\author{Vadim Volodin}%
\address{Department of Mechanics and Mathematics, Moscow State University, Moscow, Russia}%
\email{volodinvadim@gmail.com}%

\thanks{Author is grateful to V.~M.~Buchstaber for statement of the problem and attention to the work and to N.~Erokhovets and A.~Gaifullin for useful discussions.}%
\subjclass{}%
\keywords{}%

\thanks{The short version of the paper is accepted by the journal "Uspekhi Matematicheskikh Nauk"}%
\begin{abstract}
We study nestohedra $P_B$ corresponding to building sets $B$. It is shown that every flag nestohedron can be obtained from a cube by successive shavings faces of codimension 2. We receive new Delzant geometric realization of flag nestohedra. The main result of the paper is that Gal's conjecture holds for every flag nestohedron. Moreover, we get the exact estimation of $\gamma$-vectors of $n$-dimensional flag nestohedra: $0\leq\gamma_i(P_B)\leq\gamma_i(Pe^n)$.
\end{abstract}
\maketitle
\section{Introduction}
Simple polytopes are the polytopes in general position with respect to moving facets. The classical problem of describing $f$-vectors of simple polytopes was solved in works \cite{St}, \cite{BL}, \cite{Mc93} in terms of $h$-polynomials. A polytope is called flag if any collection of its pairwise intersecting faces has a nonempty intersection. The problem of describing $f$-vectors of flag polytopes is open. The Dehn-Sommeville equations give that $h_i=h_{n-i}$, so we can define the $\gamma$-vector and $\gamma$-polynomial from the expression $h(P)(t)=\sum_{i=0}^{[\frac{n}{2}]}\gamma_i t^i (1+t)^{n-2i}$.

The first conjecture about conditions on $\gamma$-vectors of simple flag polytopes is Charney-Davis conjecture (see \cite{CD}), which is equivalent to nonnegativity of $\gamma_{[\frac{n}{2}]}$. Later T.~Januszkiewicz predicted that the $h$-polynomial of any simple flag polytope has only real roots. That is strengthening of Charney-Davis conjecture. In \cite{Gal} it was shown that the real root conjecture fails in four and higher dimensions and was formulated (in dual form and more general view)
\begin{con*}[Gal, \cite{Gal}, 2005]
The $\gamma$-vector of any simple flag polytope has nonnegative entries.
\end{con*}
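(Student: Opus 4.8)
\section*{Proof proposal}

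The plan is to run an induction on dimension, carried out on the dual (simplicial) side and, crucially, enlarged to the class of all flag simplicial homology spheres rather than just polytopal ones; this enlargement is forced, because links of flag simplicial spheres are again flag simplicial spheres but need not be polytopal, and it is links that the induction will feed on. The engine is the operation of stellar subdivision of an edge, which is dual to shaving off a codimension-$2$ face of a simple polytope, and which matches the construction used for nestohedra earlier in this paper. First I would record the two needed facts: (i) subdividing an edge of a flag complex again yields a flag complex, and (ii) the identity $h(\Delta')(t)=h(\Delta)(t)+t\,h(\operatorname{lk}_\Delta e)(t)$, where $\Delta'$ is the stellar subdivision of $\Delta$ at an edge $e$. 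Re-expanding both sides in the $\gamma$-basis turns (ii) into the clean recursion $\gamma_0(\Delta')=\gamma_0(\Delta)=1$ and $\gamma_i(\Delta')=\gamma_i(\Delta)+\gamma_{i-1}(\operatorname{lk}_\Delta e)$ for $i\ge1$. Since $\operatorname{lk}_\Delta e$ is a flag simplicial sphere of dimension two less than $\Delta$, the (outer) inductive hypothesis gives $\gamma(\operatorname{lk}_\Delta e)\ge0$, whence $\gamma(\Delta')\ge0$ as soon as $\gamma(\Delta)\ge0$. The boundary of the $n$-dimensional cross-polytope (dual to the $n$-cube) has $h(t)=(1+t)^n$, i.e.\ $\gamma=(1,0,\dots,0)\ge0$, so it is a legitimate base point, consistent with the cube being the seed in the nestohedron case.

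To close the argument one needs the \emph{reduction step}: every flag simplicial homology sphere --- in particular the boundary complex of every flag simplicial polytope --- is obtained from the boundary of a cross-polytope of the same dimension by a finite sequence of edge subdivisions. Granting this, Gal's conjecture follows by combining the two inductions: fixing the dimension $n$, write the sphere as $\partial(\text{cross-polytope})=\Delta_0\to\Delta_1\to\cdots\to\Delta_N=\Delta$, apply the $\gamma$-recursion at each arrow, and observe that every link that appears is a flag sphere of dimension $<n$, to which the outer inductive hypothesis applies. The low-dimensional base cases of the outer induction are immediate (a flag polygon has $\gamma_1=f_1-4\ge0$).

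The main obstacle is precisely this reduction step, and it is the reason Gal's conjecture is still open: it is \emph{not} known that an arbitrary flag simplicial sphere can be assembled from a cross-polytope by edge subdivisions --- this is itself a conjecture, and one cannot presently exclude that it fails in high dimensions --- and there is no substitute family of $\gamma$-nonnegative ``seeds'' equipped with $\gamma$-monotone moves reaching all flag polytopes. The realistic course is therefore twofold. First, prove the statement unconditionally on the subclasses for which the reduction \emph{is} available: this is exactly what is carried out here for flag nestohedra, where the explicit ``shave a codimension-$2$ face of a cube'' construction plays the role of the required sequence of subdivisions, and one should push this construction to the widest possible class of cubically realizable flag polytopes. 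Second, for the general case one would look for a richer repertoire of moves --- bistellar-type flips staying within the flag category --- connecting all flag spheres, and try to show $\gamma\ge0$ is preserved along them; but the clean recursion above is special to edge subdivisions, so controlling $\gamma$ under less structured moves is where every known attempt stalls, and even the top coefficient alone (the Charney--Davis conjecture) is open. Accordingly I would begin with the first course, treating the cube-shaving construction of this paper as the template and asking how far beyond nestohedra it extends.
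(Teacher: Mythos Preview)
The statement is Gal's \emph{conjecture}, and the paper does not prove it: it is recorded as an open conjecture, and the paper's main theorem establishes only the special case of flag nestohedra. Your proposal correctly recognizes this, explicitly naming the missing reduction step --- that every flag simplicial sphere arises from a cross-polytope boundary by edge subdivisions --- as itself an open problem, and pointing to the subclass strategy as the realistic course. So there is no disagreement with the paper; the paper simply contains no proof of this statement in full generality.

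Your technical setup (the $\gamma$-recursion under edge subdivision, preservation of flagness, the cross-polytope base with $\gamma=(1,0,\dots,0)$) is correct and is exactly the dual of the machinery the paper develops in Lemma~\ref{flagshave}, the Corollary $\gamma(Q)=\gamma(P)+\tau\gamma(G)$, and Proposition~\ref{cubering}. The paper's contribution is to supply your ``reduction step'' \emph{for flag nestohedra specifically}: Lemmas~\ref{Bcube}, \ref{main}, and~\ref{step} together show that any flag nestohedron is reached from the cube by a sequence of codimension-$2$ shavings, which is precisely the special case of the reduction you flagged as the first thing to pursue. In short, your proposal is an accurate meta-analysis of why the conjecture is open and what partial progress looks like, and the paper executes the portion of that program that is currently within reach.
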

Nestohedra is a wide class of simple polytopes with well-described combinatorics. In \cite{PRW} Gal's conjecture was proved for nestohedra corresponding to chordal building sets. In works \cite{Er},\cite{Fenn} the conjecture was proved for nestohedra corresponding to complete biparitite graphs.

In \cite{FM} it was shown that if $B_1\subset B_2$, then $P_{B_2}$ can be obtained from $P_{B_1}$ by sequence of face shavings. Here we develop this idea and show that if $P_{B_1}$ and $P_{B_2}$ are flag, then we can change the order of shavings so that only faces of codimension 2 will be shaved off.

V.~M.~Buchstaber described realization of the associhedron $As^n\subset \mathbb{R}^n$ (see \cite{B1}, Theorem 5.1) as a polytope obtained from the standard cube by shavings faces of codimension 2. The main result of this paper is that every flag nestohedron has such realization. As a corollary we obtain
\begin{thm*}
The $\gamma$-vector of any flag nestohedron has nonnegative entries.
\end{thm*}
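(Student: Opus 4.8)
The plan is to follow the sequence of codimension-$2$ shavings provided by the main theorem, beginning at the cube (whose $\gamma$-vector is $(1,0,\dots,0)$, since $h(I^{n})(t)=(1+t)^{n}$), and to show that each such shaving propagates nonnegativity of the $\gamma$-vector. The first ingredient I would establish is the behaviour of the $h$-polynomial under shaving a single face of codimension $2$. If $P'$ is obtained from a simple $n$-polytope $P$ by shaving off a face $F$ with $\dim F=n-2$, then dually this is a stellar subdivision of the edge $e$ of $\partial P^{*}$ dual to $F$, and the link of $e$ in $\partial P^{*}$ is precisely the dual boundary complex of $F$, so $h(\mathrm{lk}\,e)(t)=h(F)(t)$. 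Computing the effect of an edge stellar subdivision on the $h$-vector (the simplices not meeting $e$ are untouched, while the star of $e$ is replaced by two cones over $\mathrm{lk}\,e$) gives
\begin{equation*}
h(P')(t)=h(P)(t)+t\,h(F)(t).
\end{equation*}
Expanding both sides in the $\gamma$-basis --- $h(P)(t)=\sum_{i}\gamma_{i}(P)\,t^{i}(1+t)^{n-2i}$ and $h(F)(t)=\sum_{i}\gamma_{i}(F)\,t^{i}(1+t)^{(n-2)-2i}$, so that $t\,h(F)(t)=\sum_{j\ge1}\gamma_{j-1}(F)\,t^{j}(1+t)^{n-2j}$ --- and using uniqueness of the $\gamma$-expansion yields
\begin{equation*}
\gamma_{0}(P')=\gamma_{0}(P)=1,\qquad\gamma_{i}(P')=\gamma_{i}(P)+\gamma_{i-1}(F)\quad(i\ge1).
\end{equation*}
Hence $\gamma(P')\ge0$ entrywise whenever $\gamma(P)\ge0$ and $\gamma(F)\ge0$.

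With this in hand I would argue by a double induction: the outer induction on $n=\dim P_{B}$, and for fixed $n$ an inner induction on the number $m$ of shavings needed (via the main theorem) to obtain $P_{B}$ from a cube. If $n\le1$ a nestohedron is a point or a segment and there is nothing to prove; the case $m=0$ is the cube, whose $\gamma$-vector is $(1,0,\dots,0)$. For the inductive step, write $P_{B}=Q_{m}$, where $Q_{0}$ is a cube and $Q_{j+1}$ is obtained from $Q_{j}$ by shaving a codimension-$2$ face $F_{j}$, all $Q_{j}$ being flag nestohedra furnished by the construction; by the inner induction $\gamma(Q_{m-1})\ge0$. The shaved face $F_{m-1}\subset Q_{m-1}$ has dimension $n-2$ and, being a face of a flag nestohedron, is a product of flag nestohedra of dimension at most $n-2$ (faces of nestohedra are products of nestohedra, and faces of flag polytopes are flag); since the $h$-polynomial --- and hence the $\gamma$-polynomial --- is multiplicative under direct products, the outer inductive hypothesis applied to the factors gives $\gamma(F_{m-1})\ge0$. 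The displayed recursion then forces $\gamma(P_{B})=\gamma(Q_{m})\ge0$, completing the induction.

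The delicate points are the two structural inputs, not the final bookkeeping. The first is the $h$-polynomial identity for a codimension-$2$ shaving: one must check carefully that an edge stellar subdivision changes the $h$-polynomial by exactly $t\,h(\mathrm{lk}\,e)(t)$, and correctly identify $\mathrm{lk}\,e$ with the dual boundary of the shaved face $F$; this is the only place where geometry enters, and it is what makes the increment additive and of the clean shifted form above. The second, and the point at which flagness is genuinely used, is the well-foundedness of the induction: one needs every face shaved off in the construction to be a product of \emph{flag} nestohedra of strictly smaller dimension, and this is exactly what the main theorem supplies once combined with the description of the faces of a nestohedron and the stability of flagness under passing to faces. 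Granting these, nonnegativity of $\gamma$ simply accumulates along the shaving sequence from the cube. Tracking the positive increments $\gamma_{i-1}(F_{j})$ more carefully --- in particular verifying that the permutohedron $Pe^{n}$ corresponds to shaving the maximal admissible collection of faces --- is what I would use to upgrade this to the sharp bounds $0\le\gamma_{i}(P_{B})\le\gamma_{i}(Pe^{n})$ announced in the abstract.
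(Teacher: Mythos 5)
Your argument is correct and follows the same overall skeleton as the paper: start from the cube, use the additive formula for a codimension-$2$ shaving (your identity $h(P')=h(P)+t\,h(F)$, hence $\gamma_i(P')=\gamma_i(P)+\gamma_{i-1}(F)$, is exactly the paper's corollary $\gamma(Q)=\gamma(P)+\tau\gamma(G)$, which the paper derives directly from $f(Q)=f(P)+f(G)f(\Delta^{n-k-1})-f(G)$ rather than via dual edge subdivision), and accumulate nonnegativity along the shaving sequence. Where you genuinely diverge is in how you certify $\gamma(F)\ge 0$ for the shaved faces. The paper stays entirely inside the class $\mathcal{P}^{cube}$ of polytopes obtainable from a cube by codimension-$2$ shavings: it proves that this class is closed under passing to facets (the new facet is $G\times I$, every old facet is obtained from a facet of the previous polytope by shaving a face of codimension $1$ or $2$), and then a single induction on dimension within $\mathcal{P}^{cube}$ handles the shaved faces, with no reference to nestohedra at all. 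You instead invoke the facial structure of nestohedra (faces are products of nestohedra), heredity of flagness, and multiplicativity of $\gamma$ under products. This works, but note two external inputs: the fact that faces of nestohedra are products of nestohedra is not proved (or used) in this paper, though it is standard and available in the cited literature (Postnikov, Postnikov--Reiner--Williams, Zelevinsky); and you need the intermediate polytopes $Q_j$ to be nestohedra, which does not follow from the statement of the main theorem alone but does follow from its proof, since the construction proceeds through an increasing chain of building sets, each intermediate polytope being $P_{B'}$ for some building set $B'$ (flag by Lemma \ref{flagshave}). The paper's route buys a cleaner and more general statement --- nonnegativity of $\gamma$ for every polytope in $\mathcal{P}^{cube}$, nestohedron or not --- while yours trades that generality for the well-known product description of faces; either way the bookkeeping at the end is the same.
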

Particularly, Gal's conjecture holds for all graph-associhedra. Also we solve the problem stated in \cite{PRW}, which assumes that if $B_1\subset B_2$ and $P_{B_i}$ are flag, then $\gamma(P_{B_1})\leq\gamma(P_{B_2})$. That yields the higher bound for $\gamma$ vectors of flag nestohedra.

Applying the technique of shavings, we construct new geometric realization of a flag nestohedron as a Delzant polytope in $\mathbb{R}^n$.

It is well-known that every nestohedron is a Delzant polytope. By Delzant theorem for every Delzant polytope $P^n$ there exists a Hamiltonian toric manifold $M^{2n}$ such that $P^n$ is an image of moment map. Davis-Januszkiewicz theorem (see \cite{DJ}) states that odd Betti numbers of $M^{2n}$ are zero and even Betty numbers are equal to coordinates of the $h$-vector of $P^n$ ($b_{2i}(M)=h_i(P)$). So, Gal's conjecture is closely connected to differential geometry of Hamiltonian toric manifolds.

\section{Enumerative polynomials of polytopes}
Let $f_i$ be the number of $i$-dimensional faces of an $n$-dimensional polytope $P$. The vector $(f_0,f_1,\ldots ,f_{n-1},f_n)$ is called the $f$-vector of the polytope $P$, the polynomial $$f(P)(t)=f_0+f_1t+\ldots +f_{n-1}t^{n-1}+f_n t^n$$ is called the $f$-polynomial of the polytope $P$. The $h$-vector and $h$-polynomial are defined by:
$$h(t)=h_0+h_1t+\ldots +h_{n-1}t^{n-1}+h_n t^n=f_0+f_1(t-1)+\ldots +f_{n-1}(t-1)^{n-1}+f_n(t-1)^n=f(t-1).$$

It is also useful to define the $H$-polynomial (see \cite{B}) of two variables:
$$H(P)(\alpha,t)=h_0\alpha^n+h_1\alpha^{n-1}t+\dots+h_{n-1}\alpha t^{n-1}+h_n t^n.$$

The Dehn-Sommerville equations (see \cite{BP}) yield that $H(P)$ is symmetric for simple polytopes. Therefore, it can be represented as a polynomial of $a=\alpha+t$ and $b=\alpha t$:

\begin{align*}
H(P)=\sum\limits_{i=0}^{[\frac{n}{2}]}\gamma_i(\alpha t)^i(\alpha+t)^{n-2i}.\\
\intertext{Substitute $\alpha=1$ and obtain:}
h(P)=\sum\limits_{i=0}^{[\frac{n}{2}]}\gamma_i t^i(1+t)^{n-2i}.
\end{align*}

Both decompositions are unique. The vector $(\gamma_0,\gamma_1,\dots,\gamma_{[\frac{n}{2}]})$ is called the $\gamma$-vector of the polytope $P$, and the $\gamma$-polynomial is defined by $\gamma(P)(\tau)=\gamma_0+\gamma_1\tau+\dots+\gamma_{[\frac{n}{2}]}\tau^{[\frac{n}{2}]}$.

\section{Nestohedra}
In this section we state well-known facts about nestohedra.

\begin{defn}
A collection $B$ of nonempty subsets of $[n+1]=\{1,\ldots,n+1\}$ is called a \emph{building set} on $[n+1]$ if the following conditions hold:
\begin{enumerate}
\item[1)] $\{i\}\in B$ for all $i\in[n+1]$;
\item[2)] $S_1,S_2\in B\text{ and }S_1\cap S_2\neq\emptyset\Rightarrow S_1\cup S_2\in B$.
\end{enumerate}
The building set $B$ is \emph{connected} if $[n+1]\in B$.
\end{defn}

Two building sets $B_1$ and $B_2$ on $[n+1]$ are \emph{equivalent} if there exists a permutation $\sigma: [n+1]\to [n+1]$ that induces one to one correspondence $B_1\to B_2$

The \emph{restriction} of the building set $B$ to $S\subset [n+1]$ is the following building set on $[|S|]$: $$B|_S=\{S'\in B: S'\subset S\}$$


Define the \emph{product} of building sets $B_1$ and $B_2$ on $[n_1+1]$ and $[n_2+1]$ as the building set $B=B_1\cdot B_2=B_1\sqcup B_2$ on $[n_1+n_2+2]$ induced by connecting the interval $[n_1+1]$ to the interval $[n_2+1]$.

Restriction and product are defined up to equivalence between building sets.

The \emph{closure} of the set $B\subset [n+1]$ is the minimal by inclusion building set $\widehat{B}$ containing $B$.

\begin{defn}
Let $\Gamma$ be a graph with no loops or multiple edges on the node set $[n+1]$. The \emph{graphical building set} $B(\Gamma)$ is the collection of nonempty subsets $J\in[n+1]$ such that the induced subgraph $\Gamma|_J$ on the node set $J$ is connected.
\end{defn}

The convex $n$-dimensional polytope is called \emph{simple} if its every vertex is contained in exactly $n$ facets. The set of combinatorial simple polytopes has a structure of differential ring (see \cite{B}) introduced by V.~M.~Buchstaber.

The convex polytope is called \emph{flag} if any collection of its pairwise intersecting facets has a nonempty intersection.

The convex polytope $P\subset V, \dim V=n$ is called a \emph{Delzant polytope} if there exists a basis $A=\{a_i\}$ of $V$ such that for every vertex $v$ of $P$ there exist integer vectors parallel to outer normals to facets containing $v$ and forming a $\mathbb{Z}$ basis of $\mathbb{Z}^n\subset\mathbb{R}^n$, where $\mathbb{Z}^n$ is the set of vectors with integer coordinates. Here the scalar product and coordinates are defined by the basis $A$.

Let $M_1$ and $M_2$ be subsets of $\mathbb{R}^n$. The \emph{Minkowski sum} of $M_1$ and $M_2$ is the following subset of $\mathbb{R}^n$: $$M_1+M_2=\{x\in\mathbb{R}^n: x=x_1+x_2, x_1\in M_1, x_2\in M_2\}$$ If $M_1$ and $M_2$ are convex sets, then so is $M_1+M_2$. If $M_1$ and $M_2$ are convex polytopes, then so is $M_1+M_2$.

\begin{defn}
Let $e_i$ be the endpoints of the basis vectors of $\mathbb{R}^{n+1}$. Define the \emph{nestohedron} $P_B$ corresponding to the building set $B$ as following
$$P_B=\sum_{S\in B}\Delta^S\text{, where }\Delta^S=\conv\{e_i, i\in S\}.$$
If $B$ is a graphical building set, then $P_B$ is called a \emph{graph-associhedron}.
\end{defn}

\begin{ex*}
The building set $B=\{[n+1],\{i\}, i\in[n+1]\}$ corresponds to the simplex $\Delta^n$.

The building set $B=2^{[n+1]}=\{S: S\subset[n+1]\}$ corresponds to the permutohedron $Pe^n$.

The building set $B=\{[i,j], 1\leq i\leq j\leq n+1\}$, where $[i,j]=\{k:i\leq k\leq j\}$, corresponds to the associhedron $As^n$.
\end{ex*}

\begin{prop}\label{basic}
Let $B$ be a connected building set on $[n+1]$. Then the polytope $P_B$ has dimension $n$ and is the intersection of the hyperplane $H=\sum_{i=1}^{n+1}x_i=|B|$ with halfspaces $H_S=\sum_{i\in S}x_i\geq |B|_S|$, where $S\in B\setminus[n+1]$. Moreover, every inequality determines a facet of $P$.
\end{prop}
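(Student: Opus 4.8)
The plan is to prove the four assertions in turn: $P_B\subseteq H$; $\dim P_B=n$; $P_B\subseteq H_S$ with the stated bound attained; and then the two substantial claims, that the listed inequalities cut $P_B$ out exactly and that each of them defines a facet.

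The first three are routine. Each $\Delta^S$ lies in the hyperplane $\{\sum_i x_i=1\}$, so a Minkowski sum of the $|B|$ simplices $\Delta^S$ lies in $\{\sum_i x_i=|B|\}=H$. Moreover $\dim(M_1+M_2)\ge\dim M_1$ for all $M_1,M_2$ (a translate of $M_1$ sits inside the sum), and $\Delta^{[n+1]}$ is among the summands because $B$ is connected, so $\dim P_B\ge\dim\Delta^{[n+1]}=n$, hence $\dim P_B=n$. For the halfspaces I would minimise the linear form $\ell_S(x)=\sum_{i\in S}x_i$ on each summand: on $\Delta^{S'}$ it is constantly $1$ when $S'\subseteq S$, and otherwise (being nonnegative there) takes the value $0$ at the vertex $e_j$ for any $j\in S'\setminus S$; hence $\min_{\Delta^{S'}}\ell_S$ equals $1$ or $0$ according as $S'\subseteq S$ or not. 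Summing over $S'\in B$ (the minimum of a linear functional over a Minkowski sum is the sum of the minima over the summands) gives $\min_{P_B}\ell_S=|\{S'\in B:S'\subseteq S\}|=|B|_S|$, attained, so $P_B\subseteq H_S$ and $\{\ell_S=|B|_S|\}$ supports $P_B$; taking $S=[n+1]$ recovers $H$.

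For the ``moreover'' clause I would describe the face $F_S:=P_B\cap\{\ell_S=|B|_S|\}$ explicitly. A point of the Minkowski sum lies in $F_S$ exactly when each of its summands lies in the $\ell_S$-minimal face of the corresponding $\Delta^{S'}$, and that face is all of $\Delta^{S'}$ when $S'\subseteq S$ and equals $\Delta^{S'\setminus S}$ otherwise. Grouping terms,
\[
F_S=\Big(\sum_{S'\in B,\ S'\subseteq S}\Delta^{S'}\Big)+\Big(\sum_{S'\in B,\ S'\not\subseteq S}\Delta^{S'\setminus S}\Big),
\]
whose first part is $P_{B|_S}$ on the coordinates indexed by $S$, and whose second part, in which each $\Delta^I$ occurs with a positive multiplicity precisely when $I\in B/S$, is combinatorially $P_{B/S}$ on the coordinates indexed by $[n+1]\setminus S$ (scaling a Minkowski summand does not change the normal fan), where $B/S=\{I\subseteq[n+1]\setminus S:I\in B\text{ or }I\cup S\in B\}$ is the contraction. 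One checks that $B|_S$ (using $S\in B$) and $B/S$ (using $[n+1]\in B$) are connected building sets, so $F_S\cong P_{B|_S}\times P_{B/S}$ has dimension $(|S|-1)+(n-|S|)=n-1$ and is a facet; distinct $S\in B\setminus[n+1]$ give distinct supporting hyperplanes, hence distinct facets.

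There remains the reverse inclusion $H\cap\bigcap_{S\in B\setminus[n+1]}H_S\subseteq P_B$, the opposite containment being what was proved above; this is the step I expect to be the main obstacle, since a list of valid facet inequalities pins down a polytope only once one also knows there are no further facets. I would handle it via support functions: $h_{P_B}(\xi)=\sum_{S\in B}\max_{i\in S}\xi_i$ must be shown not to exceed the support function of the candidate polytope. The cleanest route I see realises $P_B$ as the base polytope of the set function $g(T):=|B|_T|=|\{S\in B:S\subseteq T\}|$. A short inclusion-exclusion gives $g(T)+g(U)\le g(T\cup U)+g(T\cap U)$ --- the excess counts the members of $B$ contained in $T\cup U$ but in neither $T$ nor $U$ --- so $g$ is supermodular with $g(\emptyset)=0$; and expanding each $\max_{i\in S}\xi_i$ along a decreasing rearrangement of the coordinates of $\xi$ shows that $h_{P_B}(\xi)$ coincides with the greedy support function of $\{x:\sum_i x_i=g([n+1]),\ \sum_{i\in T}x_i\ge g(T)\text{ for all }T\}$. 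The standard greedy-algorithm and inseparability facts for base polytopes of supermodular functions then yield both the asserted equality and the fact that the facet-defining subsets are exactly those $T$ admitting no partition $T=T_1\sqcup T_2$ into nonempty parts with $g(T)=g(T_1)+g(T_2)$; one checks that this set is precisely $B\setminus[n+1]$, since a $T\in B$ meets both blocks of any partition, whereas if $T\notin B$ the maximal elements of $B|_T$ are pairwise disjoint (by the union axiom) and partition $T$ into at least two blocks crossed by no member of $B|_T$. A more elementary alternative is induction on $|B|$, with base case the simplex $B=\{\{i\}:i\in[n+1]\}\cup\{[n+1]\}$: otherwise one deletes a minimal non-singleton proper element $S^\ast\in B$ (minimality prevents $S^\ast$ from being the union of two intersecting members, so $B\setminus\{S^\ast\}$ is again a connected building set) and transfers the inequality description across $P_B=P_{B\setminus\{S^\ast\}}+\Delta^{S^\ast}$, the essential point being to produce, for each $x$ in the candidate polytope, a point $z\in\Delta^{S^\ast}$ with $x-z\in P_{B\setminus\{S^\ast\}}$.
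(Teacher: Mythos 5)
The paper offers no proof of this proposition: it appears in Section~3 among the ``well-known facts about nestohedra'' and is quoted from the literature (Postnikov \cite{Post}, Feichtner--M\"uller/Sturmfels, Zelevinsky \cite{Zel}), so there is no in-paper argument to compare yours against; I can only judge your plan on its own, and it is essentially the standard proof and is sound. The easy containments, the dimension count, and the identification of the face $P_B\cap\{\ell_S=|B|_S|\}$ with $P_{B|_S}\times P_{B/S}$ via the rule ``a face of a Minkowski sum is the sum of the corresponding faces of the summands'' are all correct, and this already yields that each listed inequality is facet-defining, with distinct facets for distinct $S$. For the reverse inclusion, realizing $P_B$ as the base polytope of the supermodular function $g(T)=|B|_T|$ and matching support functions through Edmonds' greedy theorem is the right mechanism; your computation that the greedy value equals $\sum_{S\in B}\max_{i\in S}\xi_i$ is exactly what makes it close. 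Two caveats. First, your cited ``facet-defining iff inseparable'' fact is stated one-sidedly: the general criterion for base polytopes also requires inseparability of the contraction, not only of the restriction. This is harmless here --- facetness of the $S$-inequalities is already established by the $P_{B|_S}\times P_{B/S}$ decomposition (connectedness of $B/S$, which you do check, is precisely the missing half), and for $T\notin B$ all that is needed is redundancy, which follows directly by splitting $T$ into the maximal blocks of $B|_T$ (pairwise disjoint, each in $B$) and adding the corresponding inequalities, since then $g(T)=\sum_j g(T_j)$ --- but you should state the criterion correctly or bypass it as just described. Second, the alternative inductive route is only a sketch whose key step (producing $z\in\Delta^{S^{\ast}}$ with $x-z\in P_{B\setminus\{S^{\ast}\}}$) is left open, so the polymatroid argument is the one that should be written out in full.
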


If $B$ is a connected building set on $[n+1]$, then we can identify facets of nestohedron $P_B$ with elements of $B\setminus[n+1]$. Denote by $F_S$ the facet of $P_B$ corresponding to the element $S\in B$.

\begin{prop}\label{criteria}
Let $B$ be a building set. Then facets $F_{S_1},\ldots ,F_{S_k}$ have a nonempty intersection if and only if the following conditions hold:
\begin{enumerate}
\item[1)] $\forall S_i, S_j: S_i\subset S_j\text{ or }S_i\supset S_j\text{ or }S_i\cap S_j=\emptyset$;
\item[2)] $\forall S_{i_1},\dots,S_{i_p}\text{ such that } S_{i_j}\cap S_{i_l}=\emptyset:  S_{i_1}\sqcup\ldots\sqcup S_{i_p}\notin B$.
\end{enumerate}
\end{prop}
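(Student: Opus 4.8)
The plan is to derive everything from the two descriptions of $P_B$ recalled above --- the Minkowski sum $P_B=\sum_{S\in B}\Delta^S$ and the inequality presentation of Proposition~\ref{basic} --- together with the elementary identity $|B|_S|=\#\{A\in B:A\subseteq S\}$. I first reduce to connected $B$: the maximal elements of a building set are pairwise disjoint and partition $[n+1]$ into blocks $M_1,\dots,M_r$, so $B$ is the disjoint union of the connected building sets $B|_{M_t}$ and $P_B\cong P_{B|_{M_1}}\times\dots\times P_{B|_{M_r}}$; since conditions 1), 2) and the incidences among the $F_S$ are compatible with this product, it suffices to treat connected $B$. In that case $F_S=\{x\in P_B:\sum_{i\in S}x_i=|B|_S|\}$, and, reading off the Minkowski sum, \emph{every} $x\in P_B$ satisfies $\sum_{i\in T}x_i\ge|B|_T|$ for \emph{every} $T\subseteq[n+1]$ (not only for $T\in B$), because each $\Delta^S$ contributes $1$ to $\sum_{i\in T}(\cdot)_i$ when $S\subseteq T$ and a nonnegative amount always; for $T=[n+1]$ this gives $\sum_i x_i=|B|$.

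For the ``only if'' direction, fix $x\in\bigcap_{j=1}^{k}F_{S_j}$ and write $x(T)=\sum_{l\in T}x_l$. If $S_i,S_j$ pairwise intersect with neither contained in the other, then $S_i\cup S_j\in B$ by the second building set axiom, so
\[
|B|_{S_i}|+|B|_{S_j}|=x(S_i)+x(S_j)=x(S_i\cup S_j)+x(S_i\cap S_j)\ge|B|_{S_i\cup S_j}|+|B|_{S_i\cap S_j}|\ge|B|_{S_i}|+|B|_{S_j}|,
\]
the last inequality being inclusion--exclusion for the subfamilies $B|_T=\{A\in B:A\subseteq T\}$, together with $B|_{S_i}\cap B|_{S_j}=B|_{S_i\cap S_j}$. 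Hence equality holds throughout, forcing $B|_{S_i}\cup B|_{S_j}=B|_{S_i\cup S_j}$; but $S_i\cup S_j$ belongs to the right-hand family and to neither set on the left, a contradiction, so 1) holds. For 2): if $S_{i_1},\dots,S_{i_p}$ with $p\ge2$ are pairwise disjoint and $U:=S_{i_1}\sqcup\dots\sqcup S_{i_p}\in B$, then $x(U)=\sum_{t}x(S_{i_t})=\sum_{t}|B|_{S_{i_t}}|$, while the subfamilies $B|_{S_{i_1}},\dots,B|_{S_{i_p}}$ are pairwise disjoint, all contained in $B|_U$, and $U\in B|_U$ lies in none of them; thus $|B|_U|\ge1+\sum_{t}|B|_{S_{i_t}}|>x(U)\ge|B|_U|$, a contradiction, so 2) holds.

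The ``if'' direction is the one I expect to cost the most work. I would argue by induction on $n=\dim P_B$ over all building sets, the case $n=0$ being trivial. Given a collection $N=\{S_1,\dots,S_k\}$ satisfying 1), 2) with $k\ge1$, choose $S_1$ maximal in $N$ under inclusion; by 1), every other $S_j$ is then strictly contained in $S_1$ or disjoint from $S_1$. It is classical that the facet $F_{S_1}$ is combinatorially the nestohedron $P_{B_1}$ of the building set $B_1=B|_{S_1}\cdot(B/S_1)$, where $B/S_1=\{A\setminus S_1:A\in B,\ A\not\subseteq S_1\}$ is a building set on $[n+1]\setminus S_1$; here $\dim P_{B_1}=n-1$, and under this identification the trace of $F_{S_j}$ ($j\ge2$) is the facet of $P_{B_1}$ labelled by $S_j$ (in the $B|_{S_1}$ factor if $S_j\subsetneq S_1$, in the $B/S_1$ factor if $S_j\cap S_1=\emptyset$; condition 2) applied to $S_1,S_j$ rules out $S_1\sqcup S_j=[n+1]$, so this is a genuine facet label). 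A short check with the building set axioms shows that $\{S_2,\dots,S_k\}$ satisfies 1), 2) relative to $B_1$, so the inductive hypothesis produces a point of $\bigcap_{j=2}^{k}F_{S_j}$ inside $F_{S_1}$, i.e.\ a point of $\bigcap_{j=1}^{k}F_{S_j}$, completing the induction. The two places needing genuine care are the classical identification $F_{S_1}\cong P_{B|_{S_1}}\times P_{B/S_1}$ and the inheritance of 1), 2) under restriction and contraction; an alternative that avoids the induction is to extend $N$ to a collection $\widehat N$ maximal with respect to 1), 2) and to write down explicitly the vertex of $P_B$ at which exactly the equalities $x(S)=|B|_S|$, $S\in\widehat N$, hold, the verification that it satisfies the remaining inequalities $x(A)\ge|B|_A|$ again reducing to the inclusion--exclusion bookkeeping used above.
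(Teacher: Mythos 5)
The paper itself gives no proof of Proposition~\ref{criteria}: it is listed among the ``well-known facts about nestohedra'' (it is the nested-set criterion of \cite{Post}, \cite{PRW}, \cite{Zel}), so there is no in-paper argument to compare yours with; I judge the proposal on its own. Your ``only if'' half is complete and correct: the reduction to connected $B$ via the block decomposition, the observation that every $x\in P_B$ satisfies $\sum_{i\in T}x_i\ge |B|_T|$ for \emph{every} $T\subseteq[n+1]$ (read off the Minkowski sum), and the two counting arguments --- equality forcing $B|_{S_i}\cup B|_{S_j}=B|_{S_i\cup S_j}$ while $S_i\cup S_j\in B$ lies in neither, and $|B|_U|\ge 1+\sum_t|B|_{S_{i_t}}|$ when $U=\bigsqcup_t S_{i_t}\in B$ --- are all valid and pleasantly self-contained.

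The ``if'' half, as written, is not yet a proof but a reduction to a quoted classical fact, and that is where the real content of the proposition sits. Your induction needs more than the combinatorial equivalence $F_{S_1}\cong P_{B|_{S_1}}\times P_{B/S_1}$ (with $B/S_1=\{A\setminus S_1:A\in B,\ A\not\subseteq S_1\}$): you need the \emph{labeled} statement that, under this identification, the facet of the product indexed by (the image of) $S_j$ is contained in $F_{S_j}$ inside $P_B$ --- i.e.\ you must know which facet of $F_{S_1}$ each trace $F_{S_1}\cap F_A$ is, and this refined correspondence is established in \cite{Post}, \cite{PRW} by essentially the same inequality/normal-fan analysis that proves the nested-set criterion itself; also note the label map $A\mapsto A\setminus S_1$ is many-to-one, so ``the trace of $F_{S_j}$ is the facet labelled $S_j$'' cannot be deduced from abstract equivalence alone. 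The steps you call ``a short check'' are indeed fine: conditions 1), 2) do pass to $B_1=B|_{S_1}\cdot(B/S_1)$, the only delicate case being a disjoint subfamily of $\{S_2,\dots,S_k\}$ with union $U\in B/S_1$, where $U=A\setminus S_1$ with $A$ possibly meeting $S_1$, and then $A\cup S_1\in B$ contradicts 2) applied to $\{S_1,S_{i_1},\dots,S_{i_p}\}$; and $S_j\neq[n+1]\setminus S_1$ follows from 2) for the pair $\{S_1,S_j\}$ since $[n+1]\in B$, as you say. So the skeleton is right, but for the argument to stand alone you must either prove the labeled facet factorization (for instance by exhibiting the defining inequalities of $F_{S_1}$ and checking normal equivalence, in the spirit of Lemma~\ref{norm}) or actually carry out your alternative of writing down the vertex attached to a maximal collection satisfying 1), 2); at present both are deferred.
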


\begin{prop}
Every nestohedron $P_B$ is a Delzant polytope in the basis $\{e_i-e_{n+1}, i=1,\ldots ,n\}$. Particularly, every nestohedron is simple.
\end{prop}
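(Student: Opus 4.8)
The plan is to prove the stronger combinatorial statement that at every vertex of $P_B$ the primitive outer normals of the $n$ facets through it form a $\mathbb Z$-basis of $L:=\mathbb Z^{n+1}\cap V$, where $V=\{x:\sum_i x_i=0\}$ and $A=\{e_i-e_{n+1}\}_{i=1}^{n}$ is a $\mathbb Z$-basis of $L$; simplicity is then automatic, since a vertex at which $n$ facet normals form a lattice basis can lie on no further facet. First I would reduce to a connected building set: the maximal elements of any building set $B$ are pairwise disjoint by axiom 2, they partition $[n+1]$, and every member of $B$ lies in exactly one of them, so $B$ is a product of connected building sets and $P_B$ is the corresponding product of nestohedra; a product of Delzant polytopes in block-diagonal coordinates is Delzant. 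For connected $B$, Proposition~\ref{basic} puts $P_B$ in the affine hyperplane $\{x:\sum_i x_i=|B|\}$, with facets $F_S$, $S\in B\setminus\{[n+1]\}$, cut out by $\sum_{i\in S}x_i=|B|_S|$. Writing $\chi_T$ for the characteristic vector of $T\subseteq[n]$, a short computation (rewriting $\sum_{i\in S}x_i$ via the equation of the hyperplane when $n+1\in S$) shows that in the coordinates of $A$ the primitive outer normal of $F_S$ is $\pm\chi_{T(S)}$, where $T(S)=S$ if $n+1\notin S$ and $T(S)=[n+1]\setminus S$ if $n+1\in S$. So for a collection of facets $F_S$, $S\in N$, meeting at a point $v$, the Delzant condition at $v$ becomes: $|N|=n$ and $\det(\chi_{T(S)})_{S\in N}=\pm1$; and subtracting the last coordinate from the others identifies the latter with the statement that the $(n{+}1)\times(n{+}1)$ matrix with rows $\chi_S$, $S\in\widehat N:=N\cup\{[n+1]\}$ (with $\chi_{[n+1]}=(1,\dots,1)$), has determinant $\pm1$.

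By Proposition~\ref{criteria} the nonempty faces of $P_B$ correspond to nested sets $N$ and the vertices to the maximal ones. The combinatorial core is the claim that for any nested set $N$, every $S\in\widehat N$ has nonempty \emph{private part} $\operatorname{priv}(S):=S\setminus\bigcup\{S'\in\widehat N:S'\subsetneq S\}$. For $S=[n+1]$: the maximal elements $M_1,\dots,M_r$ of $N$ are pairwise disjoint members of $B\setminus\{[n+1]\}$ (part 1 of Proposition~\ref{criteria}), so by part 2 their union is a proper subset of $[n+1]$, and $\operatorname{priv}([n+1])=[n+1]\setminus\bigcup_k M_k\neq\emptyset$. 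For $S\in N$: were $\operatorname{priv}(S)$ empty, $S$ would be the disjoint union of the maximal members of $N$ lying strictly inside it — necessarily at least two, since otherwise $\operatorname{priv}(S)$ would be nonempty — contradicting part 2 of Proposition~\ref{criteria}. Granting the claim, choose $e_S\in\operatorname{priv}(S)$ for each $S\in\widehat N$; laminarity of $\widehat N$ gives $e_{S'}\in S\iff S'\subseteq S$, so ordering $\widehat N$ by decreasing cardinality makes the matrix $(\chi_S(e_{S'}))_{S,S'\in\widehat N}$ upper triangular with $1$'s on the diagonal. In particular $\{\chi_S:S\in\widehat N\}$ is linearly independent, so $|N|\le n$ for every nested set.

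Conversely, a vertex $v$ of the $n$-dimensional polytope $P_B$ lies on at least $n$ facets, and the set $N_v$ of facets through $v$ is nested; if $N$ is a maximal nested set, the face $\bigcap_{S\in N}F_S$ is nonempty, so it contains some vertex $v$, whence $N\subseteq N_v$ and by maximality $N=N_v$, so $|N|=|N_v|\ge n$. Hence $|N|=n$ for maximal $N$, every vertex lies on exactly $n$ facets (so $P_B$ is simple), $|\widehat N|=n+1$, and — since the private parts of the members of $\widehat N$ partition $[n+1]$ — each of these $n+1$ private parts is a single point, so $S\mapsto e_S$ is a bijection $\widehat N\to[n+1]$. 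Then the matrix of the first paragraph, being the triangular matrix above up to permutations of rows and columns, has determinant $\pm1$, which is exactly the Delzant condition at $v$.

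The step I expect to be the real obstacle is the private-part claim, together with the care needed to stay non-circular: since we are proving that $P_B$ is simple, we may use neither simplicity nor the equality $|N|=n$ for maximal nested sets until they have been derived from the linear independence obtained above. The reduction to connected building sets, the form $\pm\chi_{T(S)}$ of the facet normals, and the triangularity computation are routine.
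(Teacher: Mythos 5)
Your argument is correct, but there is nothing in the paper to compare it against: the proposition appears in Section~3, which explicitly only ``states well-known facts about nestohedra,'' and the paper never proves it (the standard references here would be Postnikov \cite{Post}, Zelevinsky \cite{Zel}, Feichtner--M\"uller \cite{FM}). What you give is essentially the classical nested-set proof, and it checks out: the reduction to connected $B$ via the pairwise disjoint maximal elements; the computation that in the coordinates of $\{e_i-e_{n+1}\}$ the facet functional of $F_S$ becomes $\pm\chi_{T(S)}$ (your ``subtract the last coordinate from the others'' is exactly the right change of variables, and it does convert the Delzant condition at a vertex into unimodularity of the $(n+1)\times(n+1)$ matrix of characteristic vectors of $\widehat N$); the nonemptiness of the private parts, which is precisely where condition 2) of Proposition~\ref{criteria} enters; the triangularity giving $|N|\le n$ for every nested set, so that simplicity and the bijection $\widehat N\to[n+1]$ fall out of the count $|N_v|\ge n$ rather than being assumed --- you correctly flag and avoid the circularity there. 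The only places you gloss are harmless one-liners (that the private parts of a laminar family with $[n+1]$ adjoined are disjoint and cover $[n+1]$, and that for disconnected $B$ the statement must be read with respect to the induced lattice in the affine hull of the lower-dimensional product --- a looseness already present in the paper's formulation). It is worth noting that the paper does contain a Delzant argument, but a different one: in Section~6 a new realization of $P_B$ in $\mathbb{R}^n$ is built from the cube by shaving codimension-2 faces, with normals defined recursively by $l_{S_j}=l_{S_{j_1}}+l_{S_{j_2}}$ and the Delzant property verified inductively through a determinant identity at the new vertices; that route only applies to flag nestohedra and to that particular realization, whereas your proof treats the standard Minkowski-sum realization of an arbitrary nestohedron directly and yields simplicity as a by-product.
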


\begin{prop}
Every graph-associhedron is flag.
\end{prop}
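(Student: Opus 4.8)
The plan is to verify the flag condition directly using the combinatorial criterion for intersecting facets given in Proposition~\ref{criteria}. For a graphical building set $B=B(\Gamma)$, the facets $F_{S_1},\dots,F_{S_k}$ of $P_{B(\Gamma)}$ have a common point precisely when conditions (1) and (2) hold, where now each $S_i$ is a subset of $[n+1]$ inducing a connected subgraph of $\Gamma$. To prove that $P_{B(\Gamma)}$ is flag, I must show that if the facets are \emph{pairwise} intersecting, then they have a common intersection; equivalently, I must show that pairwise compatibility already forces conditions (1) and (2) for the whole collection. So the argument splits into two parts: first, that pairwise versions of (1) and (2) imply their global versions; second — and this is the step where graphicality is used — that for graphical building sets the pairwise version of (2) is automatic, i.e. it already follows from the nestedness/disjointness condition (1).

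First I would treat condition (1). Suppose every pair $S_i,S_j$ in the collection is nested or disjoint (a \emph{laminar} family). This is a purely set-theoretic condition, and it is obviously inherited: if (1) holds for all pairs it holds trivially, since (1) is itself only a statement about pairs. So the only content in passing from ``pairwise intersecting'' to ``globally intersecting'' lies in condition (2): we must rule out the situation where three or more of the $S_i$ are pairwise disjoint but their union lies in $B(\Gamma)$, even though no two of them have a union in $B(\Gamma)$ — wait, for graphs, the union of disjoint connected subsets $S_i\sqcup S_j$ being in $B(\Gamma)$ would mean $\Gamma|_{S_i\sqcup S_j}$ is connected, which requires an edge between $S_i$ and $S_j$; but if $S_i$ and $S_j$ are disjoint sets each inducing a connected subgraph and there is no edge between them then their union is disconnected. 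The key observation is therefore: for a graph $\Gamma$, the disjoint union $S_{i_1}\sqcup\cdots\sqcup S_{i_p}$ induces a connected subgraph \emph{iff} the ``contracted'' graph on vertices $\{S_{i_1},\dots,S_{i_p}\}$ — with an edge between $S_{i_a}$ and $S_{i_b}$ whenever $\Gamma$ has an edge joining them — is connected. A connected graph on $\geq 2$ vertices always contains a spanning tree, hence at least one edge, so if $S_{i_1}\sqcup\cdots\sqcup S_{i_p}\in B(\Gamma)$ with $p\geq 2$ then already some pair $S_{i_a}\sqcup S_{i_b}\in B(\Gamma)$.

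Putting these together: suppose the facets are pairwise intersecting, so every pair satisfies (1) (laminarity) and (2) (no two disjoint $S_i,S_j$ have $S_i\sqcup S_j\in B(\Gamma)$). Condition (1) for the whole collection is immediate. For condition (2), take any subcollection $S_{i_1},\dots,S_{i_p}$ that is pairwise disjoint; if $p=1$ there is nothing to check, and if $p\geq 2$ then by the contraction observation $S_{i_1}\sqcup\cdots\sqcup S_{i_p}\in B(\Gamma)$ would force some pair $S_{i_a}\sqcup S_{i_b}\in B(\Gamma)$, contradicting pairwise condition (2). Hence $S_{i_1}\sqcup\cdots\sqcup S_{i_p}\notin B(\Gamma)$, condition (2) holds globally, and by Proposition~\ref{criteria} the facets $F_{S_1},\dots,F_{S_k}$ have a nonempty intersection; thus $P_{B(\Gamma)}$ is flag. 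The main obstacle is really just isolating and proving cleanly the contraction observation about connectivity of disjoint unions in a graph — everything else is formal bookkeeping with Proposition~\ref{criteria}.
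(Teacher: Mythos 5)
Your argument is correct. Note that the paper itself states this proposition without proof (it is quoted as a known fact, going back to Postnikov--Reiner--Williams), so there is no in-paper proof to compare against; your direct verification via Proposition~\ref{criteria} is a legitimate self-contained argument. The structure is right: condition 1) of Proposition~\ref{criteria} is literally a pairwise condition, so it passes from pairs to the whole collection for free, and the only genuine content is condition 2), which you settle with the graph-theoretic observation that if $S_{i_1}\sqcup\cdots\sqcup S_{i_p}$ induces a connected subgraph and $p\geq 2$, then some edge of $\Gamma$ joins two distinct blocks $S_{i_a}$, $S_{i_b}$ (otherwise the induced subgraph splits), and since each block induces a connected subgraph this forces $S_{i_a}\sqcup S_{i_b}\in B(\Gamma)$, contradicting the pairwise intersection of $F_{S_{i_a}}$ and $F_{S_{i_b}}$. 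Two small points to tidy up: condition 2) of Proposition~\ref{criteria} should be read for subcollections with $p\geq 2$ (as you implicitly do, since for $p=1$ it would contradict $S_{i_1}\in B$), and you should either assume $\Gamma$ connected (so that $B(\Gamma)$ is connected and facets are exactly the $F_S$, $S\in B(\Gamma)\setminus[n+1]$) or reduce the disconnected case to connected components, using that a product of flag polytopes is flag. With those remarks your proof is complete, and in fact you only need the easy direction of your ``contraction observation,'' so the argument is even shorter than you suggest.
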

So flag nestohedra is a wide class of flag simple polytopes that includes graph-associhedra.

\section{Cube shavings}

\textbf{Denotions} Elements $S\in B\setminus[n+1]$ are identified with facets of nestohedron. So, the expression "elements $S_i$ intersect" means that $S_i$ intersect as subsets of $[n+1]$. The expression "facets $S_i$ intersect" means that corresponding facets intersect. Also if some facets of one polytope $P_1$ are identified with some facets of the other polytope $P_2$ (for example, if $B_1\subset B_2$), we write "$F_i$ intersect in $P_1$ (or in $P_2$)". Facets and faces are usually denoted by $F$ and $G$.

\begin{cons*}(Face shaving)
Let $P\subset \mathbb{R}^n$ be a simple $n$-dimensional polytope such that $0$ is its internal point and $G$ be its face. Let $l_G\in{{\mathbb R}^n}^\ast$ be a linear function such that $l_G(P)\leq1$ and \\$\{x\in P: l_G x=1\}=G$. Let us call the simple polytope $Q=\{x\in P: l_G x\leq 1-\varepsilon\}$ be obtained from the polytope $P$ by shaving the face $G$. Here $\varepsilon$ is small enough such that all the vertexes of $P$ that don't belong to $G$ satisfy $l_G x\leq 1-\varepsilon$. Note that combinatorics of the polytope $Q$ depends only on combinatorics of the polytope $P$ and the shaved off face. The polytope $Q$ has one new facet determined by the section $l_G x=1-\varepsilon$.
\end{cons*}

\begin{prop}\label{shaveresult}
Let the polytope $Q$ be obtained from the simple $n$-dimensional polytope $P$ by shaving the face $G$ of dimension $k$. Then the new facet $F_0$ corresponding to the section of $P$ is combinatorially equivalent to $G\times \Delta^{n-k-1}$.
\end{prop}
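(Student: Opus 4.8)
The plan is to work directly with the combinatorial structure of the shaved polytope $Q$, identifying its faces via the supporting hyperplanes of $P$ together with the new cutting hyperplane $l_G x = 1-\varepsilon$. Let $G$ be the face of dimension $k$, so $G$ lies in exactly $n-k$ facets $F_1,\dots,F_{n-k}$ of $P$; by simplicity these intersect transversally along $G$. The new facet is $F_0 = \{x \in Q : l_G x = 1-\varepsilon\}$, and I want to describe all faces of $Q$ contained in $F_0$. A face of $Q$ lying in $F_0$ is an intersection $F_0 \cap F_{i_1} \cap \dots \cap F_{i_r}$ for facets $F_{i_j}$ of $P$ that survive in $Q$ (facets of $P$ other than a facet entirely consumed by the shaving — but since $G$ is a proper face, every facet of $P$ still contributes a facet of $Q$). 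So the combinatorics of $F_0$ is governed by: which collections of facets of $P$ meet the thin slab near $G$.

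First I would set up coordinates adapted to $G$: choose an affine chart in which $G = \{y_1 = \dots = y_{n-k} = 0\}$ locally and $F_j = \{y_j = 0\}$ for $j=1,\dots,n-k$, with $P$ lying in $\{y_1,\dots,y_{n-k}\ge 0\}$ near $G$, and $l_G x$ corresponding (up to positive scaling and lower-order terms) to $y_1 + \dots + y_{n-k}$ near $G$. Then $F_0$ near $G$ is the set $\{y_1+\dots+y_{n-k} = \varepsilon,\ y_j \ge 0\}$, and intersecting with $r$ of the facets $F_j$ means setting $r$ of these coordinates to zero. Second, I would observe that along $F_0$ the remaining "$G$-directions" (the coordinates parametrizing $G$ itself, i.e.\ the last $k$ coordinates plus the directions inside the simplex factor) decouple from the "$\Delta$-directions" $y_1,\dots,y_{n-k}$: for $\varepsilon$ small, a facet $F_i$ of $P$ with $i > n-k$ meets the slab near $G$ exactly when its trace on $G$ is nonempty, and its intersection with $F_0$ is (a neighborhood of) that trace times the full simplex slice. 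This is where the product structure appears: the slice $\{y_1+\dots+y_{n-k}=\varepsilon,\ y_j\ge 0\}$ is an $(n-k-1)$-simplex $\Delta^{n-k-1}$ with facets indexed by $F_1,\dots,F_{n-k}$, and the transverse data is exactly a copy of $G$.

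Third, I would make the identification of face posets precise. A face of $Q$ inside $F_0$ is specified by a pair: a subset $I \subseteq \{1,\dots,n-k\}$ (the $F_j$'s we intersect with, giving a face of $\Delta^{n-k-1}$) together with a face of $G$ (cut out by facets $F_i$, $i>n-k$, whose traces on $G$ are the facets of $G$ — and here I use that $P$ is simple, so distinct facets through $G$ restrict to distinct facets of $G$ and the incidence structure is inherited). The constraint from condition on $\varepsilon$ in the Construction guarantees no "spurious" facets of $P$ (those disjoint from $G$) intersect $F_0$. One checks this correspondence is inclusion-reversing-compatible in both directions, hence an isomorphism of face lattices $\mathcal{L}(F_0) \cong \mathcal{L}(G \times \Delta^{n-k-1})$, which gives the claimed combinatorial equivalence.

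The main obstacle I anticipate is the rigorous decoupling step: showing that for small $\varepsilon$ the facets $F_i$ with $i > n-k$ interact with $F_0$ only through their restriction to $G$, with no interference from the bending of these facets away from $G$ or from facets not containing $G$. This is intuitively clear from continuity/compactness (the relevant incidences are stable under small perturbation, and $\varepsilon \to 0$ degenerates $F_0$ onto $G$), but writing it cleanly — ideally by a normal-fan or local-cone argument rather than explicit inequalities — is the technical heart. One clean route is: the normal cone of $G$ in $P$ is $(n-k)$-dimensional and, by simplicity, is a simplicial cone spanned by the outer normals of $F_1,\dots,F_{n-k}$; the shaving replaces this cone's apex by the new ray (normal of $F_0$), and the link of that ray in the updated fan is the join of the boundary of the simplicial cone (giving the $\Delta^{n-k-1}$ factor) with the normal fan of $G$ (giving the $G$ factor) — and a join of normal fans corresponds to a product of polytopes.
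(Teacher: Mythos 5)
Your plan is correct, and its elementary (coordinate/face-lattice) route is in substance the paper's own argument, just organized one dimension differently: the paper does not analyze $F_0$ directly but first considers the cap $P'=P\cap\{x: l_Gx\ge 1-\varepsilon\}$ cut off by the section, lists its facets ($F_0$; the $n-k$ facets $F_1,\dots,F_{n-k}$ containing $G$; and the facets meeting $G$ without containing it, which by simplicity correspond bijectively to the facets of $G$), matches the intersection pattern against $G\times\Delta^{n-k}$, and then reads off $F_0$ as the facet of $G\times\Delta^{n-k}$ disjoint from $G\times pt$, i.e. $G\times\Delta^{n-k-1}$. Your direct indexing of faces of $F_0$ by pairs (proper subset of $\{F_1,\dots,F_{n-k}\}$, face of $G$) carries the same combinatorial content. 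The ``decoupling'' step you single out as the technical heart is handled in the paper at the same informal level (the section is taken close enough to $G$ to separate it from all facets disjoint from $G$, plus the assertion that every face of $P'$ not contained in $F_0$ meets $G$ and every face not contained in $G$ meets $F_0$), so nothing the paper supplies is missing from your plan. Your normal-fan alternative is a genuinely different, dual formalization: shaving $G$ is a stellar subdivision of the normal fan at the simplicial cone $N(G)$ spanned by the normals of $F_1,\dots,F_{n-k}$, and the star of the new ray, projected along it, is the product of the normal fans of $\Delta^{n-k-1}$ and of $G$, i.e. the normal fan of $G\times\Delta^{n-k-1}$; this buys an $\varepsilon$-free, purely combinatorial proof of exactly the decoupling step, at the cost of fan machinery the paper avoids. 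One minor caveat: your parenthetical claim that every facet of $P$ survives the shaving because $G$ is a proper face fails in the degenerate case where $G$ is itself a facet (then $F_1=G$ disappears and $F_0\cong G\times\Delta^{0}$); this does not affect the codimension-$\ge 2$ shavings used in the paper.
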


\begin{proof}
Let $P'$ be the intersection of $P$ with the halfspace (determined by the section) that contains $G$. First, show that the polytope $P'$ is combinatorially equivallent to $G\times \Delta^{n-k}$. Let $G=F_1\cap\ldots\cap F_{n-k}$, where $F_i$ are facets of $P$ and $k=\dim G$. Facets of $G$ are in one to one correspondence with facets $F'_1,\ldots ,F'_l$ of the polytope $P'$, where $F'_1,\ldots ,F'_l$ intersect $G$ but don't contain it. Denote the section by $F_0$. All the facets of $P'$ are $F_0,F_1,\ldots ,F_{n-k}$ and $F'_1,\ldots ,F'_l$. Indeed, $P'$ has no other facets, since picking the section close enough to $G$ one can separate $G$ from the facets that don't intersect $G$. Every face of $P'$ not contained in $F_0$ intersects $G$ and every face of $P'$ not contained in $G$ intersects $F_0$. Therefore, we have
$$\bigcap_{i\in I}F_i\bigcap_{j\in J}F'_j\neq\emptyset\text{ in $P'$}\Leftrightarrow |I|\leq n-k\textrm{ and }\bigcap_{j\in J}(G\cap F'_j)\neq\emptyset\text{ in $P$}$$
Identifying facets $F_i$ with facets of $\Delta^{n-k}$, and facets $F'_i$ of $P'$ with facets $G\cap F'_i$ of $G$, we obtain that the face lattices of polytopes $P'$ and $G\times \Delta^{n-k}$ are equivalent.

The result follows, since $F_0$ is the facet of $G\times\Delta^{n-k}$ that doesn't intersect $G\times pt$.
\end{proof}

\begin{prop}
Let the polytope $Q$ be obtained from the simple $n$-dimensional polytope $P$ by shaving the face $G$ of dimension $k$, then $\gamma(Q)=\gamma(P)+\tau\gamma(G)\gamma(\Delta^{n-k-2})$.
\end{prop}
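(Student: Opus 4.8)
The plan is to compute the $H$-polynomial (equivalently, the $h$-polynomial) of $Q$ in terms of those of $P$, $G$, and a simplex, and then to read off the $\gamma$-polynomial from the $\gamma$-decomposition. First I would observe that shaving $G$ replaces $P$ by $P' = \{x \in P : l_G x \le 1-\eps\}$, so that combinatorially $Q$ has the same face lattice as $P$ with the face $G$ "blown up": every face of $P$ containing $G$ survives, the face $G$ itself is replaced by the new facet $F_0 \cong G \times \Delta^{n-k-1}$ (Proposition \ref{shaveresult}), and all faces not containing $G$ are untouched. Counting faces, one gets a clean relation at the level of $f$-polynomials: $f(Q) = f(P) + \bigl(f(F_0) - f(G)\bigr)\cdot$(contribution of how $F_0$ sits in $Q$). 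It is cleaner to pass to $H$-polynomials, where shaving a face has a known effect: shaving a $k$-face multiplies the "local" contribution of $G$ by the $H$-polynomial of $\Delta^{n-k-1}$ in the transverse directions. Concretely, I expect the identity
$$H(Q) = H(P) + \alpha t\, H(G)\, H(\Delta^{n-k-2}),$$
where the factor $\alpha t$ accounts for the fact that $G$ has codimension $n-k$ in $P$ and $F_0$ has codimension $1$ in $Q$, so the net new contribution is concentrated away from the two extreme coefficients. This is exactly the two-variable refinement of the standard formula $h(Q)(t) = h(P)(t) + t(1+t+\dots+t^{n-k-2})\,h(G)(t)$ for face shavings, and $1+t+\dots+t^{n-k-2} = h(\Delta^{n-k-2})(t)$.

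Granting that $H$-identity, the rest is formal. Each of $H(P)$, $H(G)$, $H(\Delta^{n-k-2})$ is symmetric in $\alpha, t$ (Dehn–Sommerville, as recorded in the excerpt), so each has a unique $\gamma$-decomposition $H(\cdot) = \sum_i \gamma_i(\cdot)(\alpha t)^i(\alpha+t)^{\deg - 2i}$. Substituting these decompositions into $H(Q) = H(P) + \alpha t\, H(G)\, H(\Delta^{n-k-2})$ and using that the product of the two $\gamma$-sums for $G$ (in $k$ variables' worth of degree $k$) and $\Delta^{n-k-2}$ (degree $n-k-2$) is again a sum of terms $(\alpha t)^j (\alpha+t)^{(n-2)-2j}$, one sees that the extra $\alpha t$ raises each such term to $(\alpha t)^{j+1}(\alpha+t)^{n-2(j+1)}$, i.e. the $\gamma$-decomposition of $\alpha t\, H(G) H(\Delta^{n-k-2})$ has coefficients exactly those of $\tau\,\gamma(G)(\tau)\gamma(\Delta^{n-k-2})(\tau)$. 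By uniqueness of the $\gamma$-decomposition of the symmetric polynomial $H(Q)$, we conclude $\gamma(Q)(\tau) = \gamma(P)(\tau) + \tau\,\gamma(G)(\tau)\,\gamma(\Delta^{n-k-2})(\tau)$, which is the claim.

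The main obstacle is establishing the $h$- (or $H$-) polynomial identity $h(Q)(t) = h(P)(t) + t\,h(\Delta^{n-k-2})(t)\,h(G)(t)$ with the correct transverse factor and the correct power of $t$. The safest route is to use the combinatorial description of $Q$'s face lattice together with the product formula $H(X \times Y) = H(X)H(Y)$: since $P' \cong G \times \Delta^{n-k}$ by the proof of Proposition \ref{shaveresult}, we have $H(P') = H(G)H(\Delta^{n-k})$, and $Q$ is obtained by gluing $P'$ to the complementary piece of $P$ along the common facet $F_0$; an inclusion–exclusion on faces (faces of $P$, plus new faces of $P'$ not in $P$, identified along $F_0 \cong G \times \Delta^{n-k-1}$) yields $h(Q) = h(P) + h(P') - h(F_0) - (\text{correction for }F_0\text{ being a new facet})$. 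Working this through and simplifying $h(\Delta^m)(t) = 1 + t + \dots + t^m$ gives the stated factor $t\,h(\Delta^{n-k-2})(t)$; I would double-check the bookkeeping in the boundary cases $k = n-1$ (where $\Delta^{n-k-2} = \Delta^{-1}$ should be interpreted as having $h$-polynomial $1$ and contributing nothing, consistent with shaving a facet doing nothing) and $k=n-2$ (shaving a codimension-$2$ face, the case actually needed in the paper). Once the scalar identity is pinned down, promoting it to the two-variable $H$-identity is immediate from symmetry and degree considerations, and the $\gamma$-statement follows as above.
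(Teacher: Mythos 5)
Your overall strategy is the paper's: establish $h(Q)=h(P)+t\,h(G)\,h(\Delta^{n-k-2})$ and then read off the $\gamma$-identity from uniqueness of the decomposition $h=\sum_i\gamma_i t^i(1+t)^{n-2i}$; that formal last step of yours is correct and matches the paper. The gap sits exactly where you place ``the main obstacle'': the $h$-identity is asserted rather than proved, and both justifications you sketch are flawed. First, your description of the face lattice of $Q$ is wrong: it is not true that ``all faces not containing $G$ are untouched.'' Every face of $P$ contained in $G$ --- not only $G$ itself --- is destroyed by the shaving (truncate an edge of $I^3$: its two endpoints disappear). The correct statement, which gives the identity in one line, is that the faces of $Q$ are precisely the faces of $P$ not contained in $G$ together with all faces of the new facet $F_0\cong G\times\Delta^{n-k-1}$; hence $f(Q)=f(P)-f(G)+f(G)f(\Delta^{n-k-1})$, and substituting $t\mapsto t-1$ yields $h(Q)=h(P)+h(G)\bigl(h(\Delta^{n-k-1})-1\bigr)=h(P)+t\,h(G)h(\Delta^{n-k-2})$. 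This direct count is exactly the paper's proof.

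Second, the gluing route you call ``the safest'' fails as written: $f$- and $h$-polynomials are not inclusion--exclusive over the subdivision $P=Q\cup P'$, $Q\cap P'=F_0$, because every face of $P$ that meets $G$ without being contained in it splits into a face of $Q$, a face of $P'$, and a face of $F_0$, so it is counted twice on the right-hand side. Concretely, for the vertex truncation of $I^3$ one has $h(P)+h(P')-h(F_0)=1+3t+3t^2+2t^3$, while $h(Q)=1+4t+4t^2+t^3$; the true relation is $h(Q)=h(P)-h(P')+(1+t)h(F_0)$, with signs and a factor quite different from your ansatz, and the needed correction is not ``for $F_0$ being a new facet'' but a count of all crossing faces. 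Since pinning this down is the entire content of the proposition, the deferred bookkeeping is a genuine gap --- and repairing it essentially collapses to the direct face count above, i.e.\ to the paper's argument.
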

\begin{proof}
The shaving removes the face $G$ and adds the face $G\times\Delta^{d-k-1}$ instead, then $$f(Q)=f(P)+f(G)f(\Delta^{n-k-1})-f(G).$$ Whence:
\begin{multline*}
h(Q)=h(P)+h(G)h(\Delta^{n-k-1})-h(G)=h(P)+h(G)(\sum_{i=0}^{n-k-1}t^i-1)=h(P)+t h(G)h(\Delta^{n-k-2})=\\=\sum_{i=0}^{[\frac{n}{2}]}\gamma_i^Pt^i(t+1)^{n-2i}+
t\left(\sum_{i=0}^{[\frac{k}{2}]}\gamma_i^Gt^i(t+1)^{k-2i}\right)\left(\sum_{j=0}^{[\frac{n-k-2}{2}]}\gamma_j^{\Delta} t^j(t+1)^{n-k-2-2j}\right)=\\=
\sum_{i=0}^{[\frac{n}{2}]}\gamma_i^Pt^i(t+1)^{n-2i}+\sum_{i=0}^{[\frac{k}{2}]}\sum_{j=0}^{[\frac{n-k-2}{2}]}\gamma_i^G\gamma_j^{\Delta}t^{i+j+1}(t+1)^{d-2(i+j+1)}
\end{multline*}
The result follows.
\end{proof}

\begin{cor}
Let the polytope $Q$ be obtained from the simple polytope $P$ by shaving the face $G$ of codimension 2, then $\gamma(Q)=\gamma(P)+\tau\gamma(G)$.
\end{cor}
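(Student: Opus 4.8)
The plan is to read this off directly from the preceding Proposition, which asserts that shaving a face $G$ of dimension $k$ from a simple $n$-dimensional polytope $P$ produces a polytope $Q$ with $\gamma(Q) = \gamma(P) + \tau\,\gamma(G)\,\gamma(\Delta^{n-k-2})$. Codimension $2$ means $k = n-2$, so the last factor becomes $\gamma(\Delta^{n-k-2}) = \gamma(\Delta^{0})$, the $\gamma$-polynomial of a single point. Hence the corollary reduces to the normalization $\gamma(\Delta^0) = 1$ together with this substitution.

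So the single step to carry out is the computation $\gamma(\Delta^0) = 1$. A point has $f$-polynomial $f(\Delta^0)(t) = 1$ (only $f_0 = 1$), hence $h(\Delta^0)(t) = f(t-1) = 1$; in dimension $0$ the defining identity $h = \sum_i \gamma_i\, t^i (1+t)^{-2i}$ has a single term (the index $i$ ranges over $0\le i\le 0$), forcing $\gamma_0 = 1$, i.e. $\gamma(\Delta^0)(\tau) = 1$. Equivalently one may note that $H(\Delta^0)(\alpha,t) = 1$. Substituting into the Proposition gives $\gamma(Q) = \gamma(P) + \tau\,\gamma(G)\cdot 1 = \gamma(P) + \tau\,\gamma(G)$, as claimed.

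I do not expect any obstacle: the statement is a pure specialization of the previous Proposition, and the only extra ingredient — that the $\gamma$-polynomial of a point is $1$ — is immediate from the definitions. Should one prefer to bypass the Proposition entirely, the same conclusion follows from Proposition~\ref{shaveresult}: a codimension-$2$ shaving replaces $G$ by $G\times\Delta^1$, so $f(Q) = f(P) + f(G)\bigl(f(\Delta^1)-1\bigr)$; passing to $h$-polynomials and using $h(\Delta^1)(t) = 1+t$ yields $h(Q) = h(P) + t\,h(G)$, and expanding $h(P)$ and $h(G)$ in the $(1+t)$-basis and invoking uniqueness of the $\gamma$-decomposition gives exactly $\gamma_m(Q) = \gamma_m(P) + \gamma_{m-1}(G)$, that is $\gamma(Q) = \gamma(P) + \tau\,\gamma(G)$.
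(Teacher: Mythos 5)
Your proposal is correct and matches the paper's intent: the corollary is stated without a separate proof precisely because it is the specialization $k=n-2$ of the preceding proposition, with the only extra observation being $\gamma(\Delta^0)=1$, which you verify. Your alternative direct computation via $h(Q)=h(P)+t\,h(G)$ is also the same calculation the proposition's proof performs, just restricted to codimension $2$, so nothing is missing.
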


Consider the set $\mathcal{P}^{cube}$ of combinatorial polytopes that can be obtained from a cube by successive shavings faces of codimension 2. The dual operation, edge subdivision, used in \cite{Gal} with respect to flag simplitial polytopes.

\begin{lem}\label{flagshave}
After shaving a face of codimension 2 any simple flag polytope stays simple and flag.
\end{lem}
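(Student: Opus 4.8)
The plan is to analyze the combinatorics of the shaved polytope $Q$ directly using Proposition~\ref{shaveresult}. Suppose $P$ is simple and flag, $G$ is a face of codimension $2$, so $G = F_1 \cap F_2$ for exactly two facets $F_1, F_2$ of $P$. Since shaving preserves simplicity by construction, it remains to check flagness of $Q$. Recall the facets of $Q$: the old facets $F_i$ of $P$ (with $F_1, F_2$ now truncated), together with one new facet $F_0$, and by Proposition~\ref{shaveresult} we have $F_0 \cong G \times \Delta^1 = G \times [0,1]$, an interval bundle over $G$. I would first record exactly which facets of $Q$ pairwise intersect: $F_0$ meets precisely those old facets that meet $G$ in $P$ (i.e. $F_1$, $F_2$, and every $F'$ with $F' \cap G \neq \emptyset$), while two old facets $F, F'$ intersect in $Q$ iff they intersect in $P$ \emph{except} that $F_1 \cap F_2 = \emptyset$ in $Q$ (the face $G$ has been removed and replaced by $F_0$).

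Next I would verify the flag condition for each type of collection of pairwise-intersecting facets in $Q$. Take a set $\mathcal{S}$ of facets of $Q$ that pairwise intersect. \textbf{Case 1: $F_0 \notin \mathcal{S}$.} Then $\mathcal{S}$ consists of old facets. If $\{F_1, F_2\} \subseteq \mathcal{S}$ this is impossible, since $F_1$ and $F_2$ do not intersect in $Q$. Otherwise $\mathcal{S}$ omits at least one of $F_1, F_2$, so the intersection pattern of $\mathcal{S}$ in $Q$ agrees with that in $P$; by flagness of $P$, $\bigcap \mathcal{S} \neq \emptyset$ in $P$, and this common face is not contained in $G = F_1 \cap F_2$ (as $\mathcal{S}$ misses an $F_i$), hence survives into $Q$. \textbf{Case 2: $F_0 \in \mathcal{S}$.} Write $\mathcal{S} = \{F_0\} \cup \mathcal{S}'$ where $\mathcal{S}'$ consists of old facets, each meeting $G$ in $P$; again $\{F_1,F_2\}\not\subseteq\mathcal{S}'$ by the same disjointness, so WLOG $F_1 \notin \mathcal{S}'$, and via the product structure $F_0 \cong G \times \Delta^1$ the facets of $Q$ meeting $F_0$ restrict to facets of $G \times \Delta^1$ — specifically the old facets restrict to the "$G \times \{$endpoint$\}$" facet or to "facet-of-$G$ $\times \Delta^1$" facets. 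Then pairwise intersection inside $F_0$ reduces to pairwise intersection of faces of $G$ (which is flag, being a face of the flag polytope $P$) plus a compatible choice among the two product factors, and flagness of $G$ gives $\bigcap \mathcal{S} \cap F_0 \neq \emptyset$.

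The main obstacle — and the step deserving the most care — is Case 2 and the bookkeeping around the new facet $F_0$: one must pin down precisely how the old facets of $P$ that meet $G$ restrict to facets of the product $G \times \Delta^1$, and argue that a family of such restricted facets intersecting pairwise in $Q$ does so compatibly with \emph{one} of the two $\Delta^1$-endpoints. The key points making this work are (a) $\Delta^1$ is itself flag and has only two facets, which can never be forced together; (b) $G$ is flag as a face of the flag polytope $P$; and (c) a product of flag polytopes is flag, together with the observation that "meeting $G$ in $P$" is exactly the condition controlling which faces reach into $F_0$. Once the intersection poset of $Q$ is described in these terms, the flag property follows from the flag property of $P$, of $G$, and of $\Delta^1$, with the single genuinely new phenomenon being the \emph{loss} of the intersection $F_1 \cap F_2$, which can only help flagness (it removes a constraint). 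I would also remark that flagness of a simple polytope is equivalent to the condition that its dual simplicial complex is a flag simplicial complex, and phrase the argument dually as: the dual of $Q$ is obtained from the dual of $P$ by the stellar subdivision of the edge dual to $G$, and stellar subdivision of an edge preserves flagness of simplicial complexes — but I expect the direct polytopal argument above to be cleanest here.
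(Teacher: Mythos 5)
Your Case 1 is fine and matches the paper's argument (including the implicit use of simplicity to see that $\bigcap\mathcal{S}$, missing one of $F_1,F_2$, cannot lie in $G$ and hence survives the cut). The problem is Case 2. Your plan is to reduce everything to flagness of the new facet $F_0\cong G\times\Delta^1$, citing (a) flagness of $\Delta^1$, (b) flagness of $G$, and (c) that products of flag polytopes are flag. But these ingredients only help once you know that the \emph{restrictions} of the facets in $\mathcal{S}'$ to $F_0$ pairwise intersect inside $F_0$, and that is exactly what you never establish. Your hypothesis is only that $F,F'\in\mathcal{S}'$ intersect somewhere in $Q$ and that each of them meets $G$; a priori their intersection could lie far away from $G$, in which case $(F\cap G)\times\Delta^1$ and $(F'\cap G)\times\Delta^1$ need not meet and flagness of $G$ gives nothing. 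You yourself single this out as ``the main obstacle,'' but the three key points you list do not close it: the needed statement is $F\cap F'\cap G\neq\emptyset$, and it is a consequence of flagness of $P$ applied to the four pairwise-intersecting facets $F,F',F_1,F_2$ (each of $F,F'$ meets $G\subseteq F_1\cap F_2$). Notably, your Case 2 never invokes flagness of $P$ at all, which is a sign the argument as written cannot be complete. The worry about choosing ``compatibly one of the two $\Delta^1$-endpoints'' is, by contrast, a non-issue: since $\{F_1,F_2\}\not\subseteq\mathcal{S}$, at most one endpoint facet of the $\Delta^1$-factor ever appears.

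Once you add that missing application of flagness of $P$, your route does work, but at that point it essentially collapses into the paper's shorter argument: the paper applies flagness of $P$ directly to $\mathcal{F}\setminus F_0$ (together, implicitly, with $F_1,F_2$) to produce a nonempty face $G'=\bigcap_{F_i\in\mathcal{F}\setminus F_0}F_i$ that meets $G$ but is not contained in $G$, and then concludes by the geometric fact (proposition \ref{fac}) that such a face must meet the section $F_0$ --- no product structure or flagness of $G$ is needed. So either carry out the reduction to $F_0$ honestly, making the pairwise step explicit via flagness of $P$, or adopt the paper's direct argument; as it stands, the central step of Case 2 is asserted rather than proved.
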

\begin{proof}
Let $Q$ be obtained from $P$ by shaving the face $G=F_1\cap F_2$. All facets of $Q$ are facets of $P$ and the new facet $F_0$. Suppose that facets $\mathcal{F}$ of $Q$ are pairwise intersecting. Note that $\mathcal{F}$ doesn't contain $\{F_1,F_2\}$. If $F_0\notin\mathcal{F}$, then facets $\mathcal{F}$ intersect in $P$, and their intersection is not contained in $G$. Then some part of the intersection stays in $Q$ after shaving. If $F_0\in\mathcal{F}$, then $G'=\bigcap_{F_i\in\mathcal{F}\setminus F_0}F_i$ is a nonempty face of $P$. Note that $G'$ intersects $G$ but is not contained in $G$, whence $G'$ intersects $F_0$.
\end{proof}

\begin{prop}\label{cubering}
If $P\in \mathcal{P}^{cube}$, then $\gamma_i(P)\geq 0$.
\end{prop}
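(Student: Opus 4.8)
The plan is to induct on the number of shavings used to produce $P$ from the cube. The base case is the cube $I^n$ itself, whose $\gamma$-polynomial we must compute: since $h(I^n)(t)=(1+t)^n$, we have $\gamma(I^n)(\tau)=1$, so $\gamma_0=1\geq 0$ and all higher entries vanish. This settles $P=I^n$.

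For the inductive step, suppose $P\in\mathcal{P}^{cube}$ is obtained by shaving a codimension-2 face $G$ from a polytope $P'\in\mathcal{P}^{cube}$ that is reachable from the cube in fewer steps. By the Corollary proved above, $\gamma(P)=\gamma(P')+\tau\gamma(G)$, so it suffices to show that both $\gamma(P')$ and $\gamma(G)$ have nonnegative coefficients; multiplying by $\tau$ and adding preserves nonnegativity coefficientwise. The term $\gamma(P')$ is nonnegative by the induction hypothesis. The crux is therefore to argue that $\gamma(G)\geq 0$ as well — and here the key observation is that $G$ is itself a member of $\mathcal{P}^{cube}$, or at least a polytope to which the induction applies.

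Concretely, I would argue that a codimension-2 face of a polytope in $\mathcal{P}^{cube}$ again lies in $\mathcal{P}^{cube}$ (in dimension $n-2$). One route: every face of $I^n$ is a cube $I^k$, and shaving a codimension-2 face of $P'$ affects each face of $P'$ in a controlled way. A face $G$ of $P$ is either (i) disjoint from the locus of the shaving, hence a face of $P'$; (ii) the new facet $F_0$, which by Proposition \ref{shaveresult} (applied with $n-k-1=1$) is $G_0\times\Delta^1$ where $G_0$ is the shaved face of $P'$; or (iii) a face meeting the shaved face $G_0$, which becomes $G_0'\times\Delta^1$-type products with faces of $G_0$. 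Tracking this through the construction, every face of every polytope in $\mathcal{P}^{cube}$ is obtained from a cube by codimension-2 shavings and taking products with simplices; since $\gamma(G_1\times G_2)=\gamma(G_1)\gamma(G_2)$ and $\gamma(\Delta^m)=1$, products with simplices do not disturb nonnegativity. Thus by a simultaneous induction on dimension and number of shavings, $\gamma(G)\geq 0$, completing the argument.

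The main obstacle will be making the face-structure bookkeeping precise: one must verify that an arbitrary face of a polytope in $\mathcal{P}^{cube}$ — not just a facet — is again (a product of simplices with) a polytope built from a cube by codimension-2 shavings, and that the shaving operation on $P'$ restricts sensibly to the face $G$ whose $\gamma$-vector we need. An alternative that sidesteps some of this: strengthen the inductive statement to "every polytope in $\mathcal{P}^{cube}$, together with all of its faces, has nonnegative $\gamma$-vector," and prove this joint statement by induction on the number of shavings, using Lemma \ref{flagshave} to know flagness is preserved and the Corollary together with $\gamma(A\times B)=\gamma(A)\gamma(B)$ to propagate nonnegativity to the new faces created by each shaving. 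I expect the latter formulation to be the cleanest path.
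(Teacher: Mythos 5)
Your proposal is correct and follows essentially the same route as the paper: the paper first shows, by induction on the number of shavings, that every facet of a polytope in $\mathcal{P}^{cube}$ again lies in $\mathcal{P}^{cube}$ (the new facet being $G\times I$, the other facets being facets of $P$ with a face of codimension 1 or 2 shaved off), and then inducts on dimension using $\gamma(Q)=\gamma(P)+\tau\gamma(G)$. Your strengthened induction on the number of shavings over all faces, with the multiplicativity $\gamma(A\times B)=\gamma(A)\gamma(B)$ handling the faces of the new facet $G\times I$, is a minor repackaging of the same argument.
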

\begin{proof}
Prove that if $P\in \mathcal{P}^{cube}$, then all its facets are in $\mathcal{P}^{cube}$ by induction on the number of shaved off faces. When nothing is shaved off, there is nothing to prove. Let $Q$ be obtained from $P\in \mathcal{P}^{cube}$ by shaving the face $G$ of codimension 2. Then the new facet has a form $F_0=G\times I\in \mathcal{P}^{cube}$ by the inductive assumption. Every other facet of $Q$ is obtained from some facet of $P$ by shaving some its face. Since every face of a flag polytope is flag, from proposition \ref{shaveresult} and lemma \ref{flagshave} it follows that codimension of the shaved off face can be 1 or 2.

Now, by induction on the dimension of $P$, using the formula $\gamma(Q)=\gamma(P)+\tau\gamma(G)$, we obtain the result.
\end{proof}

\section{Realizing nestohedra by shavings}

First show that every nestohedron corresponds to some connected building set.

\begin{cons*}[N.~Erokhovets]
Let $B,B_1,\ldots ,B_{n+1}$ be connected building sets on $[n+1],[k_1],\ldots ,[k_{n+1}]$. Define the connected building set $B(B_1,\ldots ,B_{n+1})$ on $[k_1]\sqcup\ldots\sqcup[k_{n+1}]=[k_1+\ldots +k_{n+1}]$ consisting of the elements $S^i\in B_i$ and $\sqcup_{i\in S}[k_i]$, where $S\in B$.
\end{cons*}

\begin{lem}[N.~Erokhovets]\label{eq1}
Let $B,B_1,\ldots ,B_{n+1}$ be connected building sets on $[n+1],[k_1],\ldots ,[k_{n+1}]$, and $B'=B(B_1,\ldots ,B_{n+1})$. Then $P_{B'}$ is combinatorially equivalent to $P_B\times P_{B_1}\times\dots\times P_{B_{n+1}}$.
\end{lem}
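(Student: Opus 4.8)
The plan is to identify facets on both sides and then invoke Proposition~\ref{criteria} to match the intersection patterns. First I would set up notation: write $B' = B(B_1,\dots,B_{n+1})$ on the ground set $[k_1]\sqcup\dots\sqcup[k_{n+1}]$, and recall that $B'$ is connected, so by Proposition~\ref{basic} the polytope $P_{B'}$ has dimension $(k_1+\dots+k_{n+1})-1$, which equals $n + (k_1-1)+\dots+(k_{n+1}-1)$, the dimension of the product $P_B\times P_{B_1}\times\dots\times P_{B_{n+1}}$. The proper elements of $B'$ (i.e.\ those not equal to the whole ground set) fall into two types: (a) elements $S^i\in B_i$ with $S^i \subsetneq [k_i]$ — note $[k_i]$ itself is the element $\sqcup_{i\in\{i\}}[k_i]$ coming from $\{i\}\in B$ — and (b) elements $\sqcup_{i\in S}[k_i]$ for $S\in B$, $S\neq\emptyset$, with the whole ground set excluded. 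So the facets of $P_{B'}$ are in bijection with (the proper facets of each $P_{B_i}$) $\sqcup$ (the facets of $P_B$), which is exactly the facet set of the product. I would make this bijection explicit and call it $\varphi$.

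Next I would verify that $\varphi$ preserves the intersection lattice, using Proposition~\ref{criteria} on both sides. The key combinatorial observations are: two ``type (a)'' elements $S^i, T^j$ coming from different factors $i\neq j$ are disjoint, and their union $S^i \sqcup T^j$ is \emph{not} in $B'$ (since any element of $B'$ contained in $[k_i]\sqcup[k_j]$ is either contained in a single $[k_i]$ or is a union of whole blocks — and $S^i\sqcup T^j$ is neither when $S^i, T^j$ are proper); hence condition 2) of Proposition~\ref{criteria} is automatically satisfied across different $B_i$-blocks, and conditions 1)–2) for such a collection reduce to the conjunction of the corresponding conditions inside each block. For ``type (b)'' elements $\sqcup_{i\in S}[k_i]$ and $\sqcup_{i\in T}[k_i]$, nesting/disjointness and the union-not-in-$B'$ condition translate verbatim into the same conditions for $S, T$ in $B$. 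Finally, a type (a) element $S^i \subsetneq [k_i]$ and a type (b) element $\sqcup_{j\in T}[k_j]$ are nested when $i\in T$ and disjoint when $i\notin T$, and in the disjoint case $S^i \sqcup (\sqcup_{j\in T}[k_j]) = \sqcup_{j\in T'}[k_j]$ only if... it never is, because $S^i$ is proper in $[k_i]$; so again condition 2) imposes nothing new. Assembling these, a collection of facets of $P_{B'}$ has nonempty intersection iff the facets it induces in each factor $P_B, P_{B_1},\dots,P_{B_{n+1}}$ separately have nonempty intersection — which is precisely the face-lattice description of the product.

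I would also need to check the boundary case $k_i = 1$ (then $P_{B_i}$ is a point and contributes no facets, consistent with $[k_i]=\{i\text{-th singleton}\}$ having no proper sub-elements) and confirm that no ``type (b)'' collection can accidentally force a relation with a ``type (a)'' element from a block $i\notin S\cup T\cup\dots$; both are routine once the two types are separated. The main obstacle is the bookkeeping in the mixed case — confirming that condition 2) of Proposition~\ref{criteria}, which quantifies over \emph{all} sub-collections of pairwise disjoint chosen facets, never produces a disjoint union lying in $B'$ except exactly when the corresponding union lies in $B$ (for the all-type-(b) sub-collections) or in some single $B_i$ (impossible for a genuinely disjoint family within one block, since disjoint proper elements of $B_i$ whose union is in $B_i$ is already forbidden on the product side too). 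Once that is nailed down, the face lattices coincide and the combinatorial equivalence follows.
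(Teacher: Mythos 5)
Your plan is correct and is essentially the paper's own argument: the same facet bijection ($S^i\in B_i\mapsto S^i$, $S\in B\mapsto\sqcup_{i\in S}[k_i]$) together with Proposition~\ref{criteria} to match the intersecting collections of facets. The only cosmetic difference is that the paper phrases the product side through the auxiliary product building set $B''=B\sqcup B_1\sqcup\ldots\sqcup B_{n+1}$, whereas you use the facet structure of the product polytope directly.
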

\begin{proof}
Consider $B''=B\sqcup B_1\sqcup\ldots\sqcup B_{n+1}$. Facets $S_1,\ldots ,S_l\in B\setminus[n+1]$ and $S_1^i,\ldots ,S_{l_i}^i\in B_i\setminus[k_i]$ intersect in $P_{B''}$ if and only if facets $S_1,\ldots ,S_l$ intersect in $P_B$, and for every $i$ facets $S_1^i,\ldots ,S_{l_i}^i$ intersect in $P_{B_i}$. The building set $B''$ is a product of the building sets, whence $P_{B''}\sim P_B\times P_{B_1}\times\dots\times P_{B_{n+1}}$.\\
Consider the mapping $\varphi: B''\to B'$, defined by
$$\varphi(S)=\begin{cases} S&\text{, if $S\in B_i$}\\\bigsqcup\limits_{i\in S}[k_i]&\text{, if $S\in B$}\end{cases}$$
As we can see, $\varphi$ determines a bijection between facets of $P_{B''}$ and facets of $P_{B'}$. By proposition \ref{criteria}, facets $\varphi(S_1),\ldots ,\varphi(S_k)$ intersect in $P_{B'}$ if and only if facets $S_1,\ldots ,S_k$ intersect in $P_{B''}$. Therefore, $P_{B'}\sim P_{B''}$.
\end{proof}

\begin{cor}
For every nestohedron $P$ there exists a connected building set $B$ such that $P_B$ is combinatorially equivalent to $P$.
\end{cor}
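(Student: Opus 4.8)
The plan is to reduce an arbitrary nestohedron to a product of nestohedra of \emph{connected} building sets, and then fold that product into a single connected building set using the construction of Lemma~\ref{eq1}.

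First I would decompose the given building set. Let $P=P_{B_0}$ for a building set $B_0$ on $[n+1]$. Its maximal elements are pairwise disjoint (if two of them met, condition 2) of the definition would force their union back into $B_0$), and since every singleton lies in $B_0$ they partition $[n+1]$, say into $S_1,\dots,S_m$. Each restriction $C_j:=B_0|_{S_j}$ is a building set on $S_j$ that contains $S_j$, hence is connected, and $B_0=C_1\cdot\dots\cdot C_m$ is their product (every $S\in B_0$ is contained in a unique maximal element $S_j$, so $S\in C_j$). As in the proof of Lemma~\ref{eq1}, the factors of a product of building sets involve complementary coordinate subspaces, so $P_{B_0}=\sum_j P_{C_j}$ is combinatorially $P_{C_1}\times\dots\times P_{C_m}$.

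Second I would induct on $m$. If $m=1$ then $B_0$ is already connected and we are done. If $m\geq 2$, apply the inductive hypothesis to $C_1\cdot\dots\cdot C_{m-1}$ to obtain a connected building set $D$ on some $[N]$ with $P_D$ combinatorially equivalent to $P_{C_1}\times\dots\times P_{C_{m-1}}$. Now feed this into the Erokhovets construction, forming $D(B_1,\dots,B_N)$ with $B_1=C_m$ and $B_2=\dots=B_N$ the one-element building set $\{\{1\}\}$ on $[1]$, whose nestohedron is a point. By Lemma~\ref{eq1},
$$P_{D(B_1,\dots,B_N)}\sim P_D\times P_{C_m}\times(\mathrm{pt})^{N-1}\sim P_{C_1}\times\dots\times P_{C_m}\sim P .$$
Moreover $D(B_1,\dots,B_N)$ is connected: since $D$ is connected we have $[N]\in D$, so by definition of the construction the whole ground set $\bigsqcup_{i=1}^N[k_i]$ belongs to it. Taking $B:=D(B_1,\dots,B_N)$ closes the induction.

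The argument is essentially bookkeeping, and I expect no serious obstacle beyond keeping track of which building sets must be connected in order to invoke Lemma~\ref{eq1}. The one conceptual point worth stressing is why the induction is needed at all: a single application of Lemma~\ref{eq1} with outer building set $\Delta^{m-1}$ would reproduce $P$ only up to a spurious $\Delta^{m-1}$ factor, and for $m\geq 2$ there is no connected building set on $[m]$ whose nestohedron is a point. Absorbing the connected components one at a time — each step reusing the previously constructed connected building set as the outer set, devoting one slot to the next component and padding the remaining slots with points — is exactly what removes the unwanted factor.
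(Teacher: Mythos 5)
Your proof is correct and follows essentially the same route as the paper: split the building set into its connected components (restrictions to the disjoint maximal elements) and then absorb the components one at a time via the Erokhovets substitution construction, padding the unused slots with the one-point building set so that Lemma~\ref{eq1} preserves the product $P_{C_1}\times\dots\times P_{C_m}$ while yielding a connected building set. The paper phrases this as an iteration that reduces the number of maximal elements of $B'$ one step at a time, which is the same induction you carry out.
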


\begin{proof}
Indeed, an arbitrary building set $B'$ has a form $B_1\sqcup\ldots\sqcup B_k$, where $B_i$ are connected building sets. Let's set the building set $B''=B_1(B_2,\{1\},\ldots,\{1\})\sqcup B_3\sqcup\ldots\sqcup B_k$ that corresponds to the same nestohedron and has less maximal by inclusion elements than $B'$. Then apply the construction of substitution to $B''$ and so on. Finally we get a building set $B$ with the unique maximal by inclusion element. It follows that $B$ is connected.
\end{proof}

Without loss of generality, we suppose that every nestohedron corresponds to a connected building set.

In sequel we construct the sequence of building sets $B_0\subset\dots\subset B_N=B$, where $B_0$ corresponds to the cube. From \cite{FM} (Theorem 4.2) we can extract that for connected building sets $B'\subset B''$ the polytope $P_{B''}$ can be obtained from $P_{B'}$ by sequence of shavings. Our purpose is to show that if $P_B$ is flag, then we can choose $B_i$ such a way that every shaved off face has codimension 2. In this case, $P_B$ has nonnegative $\gamma$-polynomial, since $P_B\in \mathcal{P}^{cube}$.
First, we find the building subset $B_0\subset B$ such that $P_{B_0}$ is combinatorially equivalent to the cube $I^n$.

\begin{lem}[\cite{PRW}, Prop. 7.1]\label{Bcube}
If $B$ is a connected building set on $[n+1]$, and $P_B$ is flag, then there exists a connected building set $B_0\subset B$ such that $P_{B_0}$ is combinatorially equivalent to the cube $I^n$.
\end{lem}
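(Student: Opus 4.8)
The plan is to build the cube-realizing building set $B_0$ explicitly from a spanning structure of $B$ and then verify it is a building set, is connected, and yields a polytope with the combinatorics of $I^n$. First I would pick a maximal chain (by inclusion) of connected elements of $B$, or more precisely fix a ``construction tree'' recording how the top element $[n+1]$ is assembled: since $P_B$ is flag, Proposition~\ref{criteria} constrains which elements of $B$ can be pairwise nested, and flagness should let me choose, for each step, a splitting of a connected element into exactly two connected elements of $B$. Iterating, I obtain a collection of $n$ ``chosen'' elements $S_1,\dots,S_n$ (one per facet we keep) organized in a binary rooted tree whose leaves are the singletons $\{i\}$; I then set $B_0$ to be the closure of $\{S_1,\dots,S_n\}\cup\{\{i\}: i\in[n+1]\}$ inside $B$.

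Next I would check that $B_0$ really is a connected building set contained in $B$: containment in $B$ holds because each $S_j$ is already in $B$ and $B$ is closed under the union operation (2), so $\widehat{\{S_j\}\cup\{\{i\}\}}\subseteq B$; connectedness holds because the root of the tree is $[n+1]$. The crucial point is then to show $P_{B_0}\cong I^n$. I would do this by induction on $n$, using Lemma~\ref{eq1} (Erokhovets substitution): the binary tree structure exhibits $B_0$ as $B'(B'',\{1\},\dots,\{1\})$-type iterated substitutions where $B'$ is the building set of a single edge $I^1$ (that is, $\{\{1\},\{2\},\{1,2\}\}$ on $[2]$), hence $P_{B_0}$ is combinatorially a product of intervals, i.e. $I^n$. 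Concretely, if the root splits $[n+1]$ into $T$ and $[n+1]\setminus T$ with the two subtrees giving, inductively, cubes of dimensions $|T|-1$ and $n-|T|$, then the substitution lemma gives $P_{B_0}\cong I^1\times I^{|T|-1}\times I^{n-|T|}=I^n$.

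Alternatively, and perhaps more cleanly, I would verify $P_{B_0}\cong I^n$ directly via Proposition~\ref{criteria}: one shows that the $n$ facets $F_{S_1},\dots,F_{S_n}$ can be paired up with the $n$ ``complementary'' facets so that the facet hypergraph is exactly that of the $n$-cube (each vertex lies in exactly $n$ facets, and two facets fail to meet iff they are a complementary pair). The binary-tree bookkeeping makes conditions (1) and (2) of Proposition~\ref{criteria} transparent: nestedness of chosen elements follows from the tree being a tree, and the disjointness-union condition (2) is where flagness of $P_B$ is used — it guarantees that when two disjoint chosen elements have union lying in $B$, that union is itself one of our chosen elements (so it does not obstruct the intersection), which is exactly the mechanism forcing the combinatorics to collapse to a cube rather than something larger.

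The main obstacle I anticipate is the very first step: extracting from flagness a \emph{compatible} family of binary splittings, i.e. showing that one can always decompose a connected element $S\in B$ (with $|S|\ge 2$) as $S = S'\sqcup S''$ with $S',S''\in B|_S$ connected and, moreover, do so consistently for all the elements appearing in the tree so that $B_0$ ends up with exactly $n$ non-maximal elements and no ``accidental'' extra elements in its closure. Controlling the closure $\widehat{\{S_j\}\cup\{\{i\}\}}$ — proving it contains nothing beyond the $S_j$ and the singletons (apart from $[n+1]$ itself) — is the technical heart, and this is precisely where flagness (via the failure of condition (2) of Proposition~\ref{criteria} for non-flag nestohedra) must be invoked; without it the closure can acquire unions of disjoint $S_j$'s that are not themselves chosen, and $P_{B_0}$ would not be a cube.
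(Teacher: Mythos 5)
Your overall architecture coincides with the paper's: build a binary laminar family inside $B$ (a rooted binary tree whose nodes are elements of $B$, each internal node the disjoint union of its two children, leaves the singletons, root $[n+1]$), and conclude $P_{B_0}\sim I^n$ by iterating Lemma~\ref{eq1}. But the heart of the proof is exactly the step you defer: you write that flagness ``should let me choose, for each step, a splitting of a connected element into exactly two elements of $B$,'' and later name this as the main anticipated obstacle, yet you never prove it — and this is the only place where flagness is actually used. The paper's argument for this step is short but essential: take a coarsest partition $[n+1]=S_1\sqcup\dots\sqcup S_k$ with $S_j\in B\setminus\{[n+1]\}$ (it exists because the singletons give such a partition; ``coarsest'' means maximal by inclusion). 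Maximality forces $\bigsqcup_{j\in J}S_j\notin B$ for every $J$ with $1<|J|<k$, so by Proposition~\ref{criteria} the facets $F_{S_1},\dots,F_{S_k}$ are pairwise intersecting, while their total intersection is empty because $S_1\sqcup\dots\sqcup S_k=[n+1]\in B$. If $k\geq 3$ this contradicts flagness of $P_B$, hence $k=2$; one then recurses inside $B|_{S_1}$ and $B|_{S_2}$, using that their nestohedra are again flag (they are, up to a product factor, faces of the flag polytope $P_B$). Without some such argument your construction of the tree never gets off the ground, so as it stands the proposal has a genuine gap.

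A secondary point: your worry about ``controlling the closure'' is a red herring, and the claim that flagness is what prevents the closure from acquiring unions of disjoint chosen elements is not correct. The building-set closure only adds unions of \emph{intersecting} elements (condition 2 of the definition), and your chosen family is laminar — any two members are nested or disjoint — so together with the singletons it is already a building set; the closure adds nothing. Likewise, once $B_0$ is a binary laminar building set, $P_{B_0}\sim I^n$ follows from Lemma~\ref{eq1} alone, with no reference to which unions happen to lie in the ambient $B$: the combinatorics of $P_{B_0}$ depends only on $B_0$ itself. So flagness enters exactly once, in the splitting step described above — precisely the step your proposal leaves open.
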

\begin{proof}
Prove it by induction on $n$. If $n=1$, there is nothing to prove. Suppose that the lemma holds for all $m\leq n$ and prove it for $m=n+1$. Pick the maximal by inclusion collection $S_1,\ldots ,S_k\in B\setminus[n+1]$ such that $S_1\sqcup\ldots\sqcup S_k=[n+1]$. Note that $\forall J\subset[k], 1<|J|<k: \bigsqcup_{j\in J}S_j\notin B$. Therefore, $k=2$. Indeed, if $k>2$, then facets $S_1,\ldots ,S_k$ are pairwise intersecting, but their intersection is empty set. The building sets $B|_{S_1}$ and $B|_{S_2}$ also correspond to flag polytopes. By the inductive assumption, there exist $B_0^1\subset B|_{S_1}$ and $B_0^2\subset B|_{S_2}$ such that $P_{B_0^1}\sim I^{|S_1|-1}$ and $P_{B_0^2}\sim I^{|S_2|-1}$. Put $B_0=B_0^1\cup B_0^2\cup[n+1]$. By lemma \ref{eq1}, we have $P_{B_0}\sim I\times P_{B_0^1}\times P_{B_0^2}\sim I^{1+(|S_1|-1)+(|S_2|-1)}\sim I^n$.
\end{proof}

Now, let us show which faces will be shaved off in general construction.

\begin{cons*}[Decomposition of $S\in B_1$ by elements of $B_0$]
Let $B_0$ and $B_1$ be connected building sets on $[n+1]$, $B_0\subset B_1$, and $S\in B_1$. Let us call the decomposition of $S$ by elements of $B_0$ the representation $S=S_1\sqcup\ldots \sqcup S_k, S_j\in B_0$ such that $k$ is minimal among such disjoint representations. Denote the decomposition by $B_0(S)$.
\end{cons*}

The next proposition can be easily checked.

\begin{prop}\label{pr}
Decomposition has the following properties:
\begin{enumerate}
\item[1)] Decomposition exists and is unique.
\item[2)] If $B_0\subset B_1\subset B_2$, then for $S\in B_2$ we have $B_0(S)=B_0(B_1(S))$.
\item[3)] If $S'\subset S$, then $\exists S_j\in B_0(S): S'\subset S_j$.
\item[4)] Let $S=S_1\sqcup\ldots \sqcup S_k, S_j\in B_0$. The collection $\{S_j\}$ is the decomposition of $S$ if and only if $$\forall J\subset [k], 1<|J|<k:\bigsqcup_{j\in J}S_j\notin B_0$$
\end{enumerate}
\end{prop}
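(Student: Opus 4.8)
The plan is to reduce all four statements to a single structural observation: for $S\in B_1$ the decomposition $B_0(S)$ coincides with the set of maximal (by inclusion) elements of the restricted building set $B_0|_S=\{S'\in B_0:S'\subseteq S\}$. The only tool needed beyond this is the elementary fact that the maximal elements of any building set on a ground set $V$ form a partition of $V$: two of them that intersect would have their union again in the building set, contradicting maximality, so they are pairwise disjoint; and since every singleton belongs to the building set, every point of $V$ lies in some maximal element, so they cover $V$.

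First I would prove the characterization, which simultaneously gives property 1. Since $B_0$ satisfies the two axioms, so does $B_0|_S$, so it is a building set on $S$, and its maximal elements $C_1,\dots,C_m$ partition $S$; this already yields one disjoint representation $S=C_1\sqcup\dots\sqcup C_m$ with all parts in $B_0$, so a decomposition exists. Conversely, for any disjoint representation $S=S_1\sqcup\dots\sqcup S_k$ with $S_j\in B_0$, each $S_j$ lies in $B_0|_S$, hence inside a unique $C_{i(j)}$; the map $j\mapsto i(j)$ is surjective because the $S_j$ cover $S=\bigsqcup_i C_i$, so $k\ge m$, and if $k=m$ the map is a bijection and each $S_j$ equals $C_{i(j)}$ (the blocks landing in a fixed $C_i$ are disjoint and cover it, so a single one must equal it). Hence the minimal representation has $m$ parts, is unique, and equals $\{C_1,\dots,C_m\}$.

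Property 3 is then immediate: if $S'\in B_0$ with $S'\subseteq S$, then $S'\in B_0|_S$, so $S'$ is contained in one of the maximal elements $C_i\in B_0(S)$. For property 2, write $B_1(S)=\{D_1,\dots,D_r\}$, a partition of $S$ into maximal elements of $B_1|_S$; the claim $B_0(S)=B_0(B_1(S))$ amounts to showing that the maximal elements of $B_0|_S$ are exactly those of the various $B_0|_{D_l}$. For "$\subseteq$": a maximal $C$ of $B_0|_S$ lies in $B_1|_S$, hence inside some $D_l$; any strictly larger element of $B_0|_{D_l}$ would also lie in $B_0|_S$, contradicting maximality of $C$, so $C$ is maximal in $B_0|_{D_l}$. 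For "$\supseteq$": if $C\in B_0|_{D_l}$ were not maximal in $B_0|_S$, a strictly larger $C'\in B_0|_S$ would meet $D_l$, so $C'\cup D_l\in B_1|_S$ would strictly contain $D_l$, contradicting maximality of $D_l$; hence $C'\subseteq D_l$, contradicting maximality of $C$ in $B_0|_{D_l}$.

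Finally, property 4 follows by comparing an arbitrary disjoint representation $S=S_1\sqcup\dots\sqcup S_k$ with the decomposition $C_1\sqcup\dots\sqcup C_m$: as in the proof of property 1 each $C_i$ is the disjoint union of the blocks $S_j$ it contains, say over $j\in J_i$, and $\{S_j\}$ is the decomposition iff every $|J_i|=1$. If some $|J_i|\ge 2$ then $\bigsqcup_{j\in J_i}S_j=C_i\in B_0$ violates the stated condition; conversely, any $J$ with $|J|\ge 2$ and $\bigsqcup_{j\in J}S_j\in B_0$ forces $J\subseteq J_i$ for the $C_i$ containing that union, so some $|J_i|\ge 2$. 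I expect the only delicate point to be the exact range of $J$ in the quantifier: when $S\notin B_0$ one has $C_i\ne S$ for all $i$, hence $|J_i|<k$, and the bound "$1<|J|<k$" is precisely what is needed; to make the equivalence hold also when $S\in B_0$ one should additionally forbid $J=[k]$ (equivalently, require $S\notin B_0$), which is anyway the situation in which the decomposition is used later. This bookkeeping, rather than any genuine difficulty, is the main thing to be careful about.
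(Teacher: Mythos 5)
Your proof is correct, and since the paper offers no argument for this proposition (it is dismissed with ``can be easily checked''), there is no official proof to compare against; your identification of $B_0(S)$ with the set of maximal elements of the restricted building set $B_0|_S$, together with the fact that the maximal elements of a building set partition the ground set, is exactly the observation the author implicitly relies on, and all four items do follow from it as you show (in particular your two-inclusion argument for item 2 and the counting via the sets $J_i$ for item 4 are complete). Your caveat on item 4 is well taken: as literally stated the ``if'' direction fails when $S\in B_0$ --- for instance $S=\{1,2\}=\{1\}\sqcup\{2\}$ with $\{1\},\{2\},\{1,2\}\in B_0$ satisfies the condition vacuously ($k=2$) yet is not the decomposition, which is $\{S\}$ --- and the equivalence is restored either by assuming $S\notin B_0$ or by allowing $J=[k]$ in the quantifier; this is harmless for the paper, since the decomposition is only ever applied to elements of $B_1\setminus B_0$ (in the construction of $P_{cut}$ and in Lemma \ref{step}). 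Your reading of item 3, with $S'\in B_0$, is also the intended one: for $S'$ merely in the larger building set the claim is false in general, and the only place the item is used (the base case of Lemma \ref{main}) concerns elements of $B_0$ contained in the shaved element, so your proof covers exactly what is needed.
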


\begin{lem}\label{norm}
Let $P,Q\subset\mathbb{R}^n$ be simple polytopes with same outer normals $\mathcal{V}$ to facets, and for every $V\subset \mathcal{V}$ we have $\bigcap_{v\in V}P_v\neq\emptyset\Rightarrow\bigcap_{v\in V}Q_v\neq\emptyset$, where $P_v$ and $Q_v$ are the facets of $P$ and $Q$ corresponding to $v$. Then $P$ and $Q$ are combinatorially equivalent.
\end{lem}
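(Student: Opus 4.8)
The plan is to reduce combinatorial equivalence to an equality of normal fans, using that the combinatorics of a simple polytope is completely recorded by which subsets of its facets have nonempty intersection. First I would recall the standard dictionary: in a simple $n$-polytope $P$, a nonempty intersection of $j$ facets is a face of codimension exactly $j$ (this uses simplicity together with the fact that every face is cut out by precisely the facets containing it), every face contains a vertex, and every vertex lies in exactly $n$ facets. Consequently $\bigcap_{v\in V}P_v\neq\emptyset$ holds if and only if $V$ is contained in the $n$-element facet set $V(p)$ of some vertex $p$ of $P$; moreover the entire face poset of $P$ is recovered from the collection $\{V(p):p\in\mathrm{vert}(P)\}$ of these $n$-subsets of $\mathcal V$.

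Next I would apply the hypothesis to just one family of subsets, namely the vertex sets of $P$. If $p$ is a vertex of $P$ then $\bigcap_{v\in V(p)}P_v\neq\emptyset$, so by hypothesis $\bigcap_{v\in V(p)}Q_v\neq\emptyset$; by the recollection above this intersection is a face of $Q$ of codimension $|V(p)|=n$, i.e.\ a vertex of $Q$ whose facet set is exactly $V(p)$. Hence every vertex set of $P$ is a vertex set of $Q$, that is, $\mathrm{vert}(P)\subseteq\mathrm{vert}(Q)$ as collections of $n$-subsets of $\mathcal V$.

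The one substantive geometric step is the reverse inclusion, which cannot come from the (one-sided) hypothesis but instead from a covering argument on normal fans. Since $P$ and $Q$ share the outer normal set $\mathcal V$, a vertex with facet set $V$ has the same normal cone $\mathrm{cone}(V)$ whether it belongs to $P$ or to $Q$, and this cone is $n$-dimensional and simplicial because the $n$ normals in $V$ are in general position (as $P$ is simple). The normal cones of the vertices of a polytope are precisely the maximal cones of its normal fan: they are $n$-dimensional, have pairwise disjoint interiors, and cover $\mathbb R^n$. By the previous paragraph every maximal cone of $\Sigma_P$ is a maximal cone of $\Sigma_Q$. If $Q$ had a vertex $q$ with $V(q)\notin\mathrm{vert}(P)$, then $\mathrm{int}\,\mathrm{cone}(V(q))$, being a nonempty open set, would meet $\mathrm{int}\,\mathrm{cone}(V(p))$ for some vertex $p$ of $P$ (those cones cover $\mathbb R^n$); but $\mathrm{cone}(V(q))$ and $\mathrm{cone}(V(p))$ are both maximal cones of $\Sigma_Q$ with a common interior point, forcing $V(q)=V(p)$, a contradiction. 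Hence $\mathrm{vert}(P)=\mathrm{vert}(Q)$, so $\Sigma_P=\Sigma_Q$, and two polytopes with equal normal fan have anti-isomorphic face posets, which is the asserted combinatorial equivalence.

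I expect the main obstacle to be exactly this asymmetry of the hypothesis: getting $\mathrm{vert}(Q)\subseteq\mathrm{vert}(P)$, where completeness of the normal fan and disjointness of the interiors of its maximal cones do the real work; the rest is the routine translation between simple polytopes and their normal fans. A secondary point to treat carefully is the claim that an $n$-fold nonempty intersection of facets of a simple polytope is forced to be a vertex (rather than a face of larger dimension lying in more than $n$ facets), which is where simplicity is genuinely used.
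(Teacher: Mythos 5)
Your argument is correct and follows essentially the same route as the paper's proof: send each vertex of $P$ to the nonempty intersection of the corresponding facets of $Q$, which by simplicity of $Q$ is a vertex with the same normal cone, then use completeness of the normal fan (your covering argument makes explicit the paper's one-line appeal to it) to conclude $Q$ has no other vertices, so the normal fans coincide and the polytopes are combinatorially equivalent. The only nitpick is a wording slip at the end: polytopes with equal normal fans have \emph{isomorphic} face posets (each is anti-isomorphic to the poset of the fan), which is what combinatorial equivalence requires.
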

\begin{proof}
Consider an arbitrary vertex $w$ of the polytope $P$. Let $P_{v_1},\ldots ,P_{v_n}$ be all the facets containing $w$. Corresponding facets $Q_{v_1},\ldots ,Q_{v_n}$ of the polytope $Q$ have a nonempty intersection, moreover, the face $Q_{v_1}\cap\ldots\cap Q_{v_n}$ doesn't intersect the other facets of $Q$, since $Q$ is simple. Therefore, every vertex of $P$ corresponds to some vertex of $Q$ with the same normal cone. Since the normal fan of $P$ is complete, $Q$ has no other vertexes, then normal fans of $P$ and $Q$ are the same. The result follows, since normal fan completely determines combinatorial type of a polytope.
\end{proof}

\begin{prop}\label{fac}
Let $Q$ be obtained from the simple polytope $P$ by shaving the face $G=F_1\cap\ldots\cap F_k$. Denote by $F_0$ the new facet corresponding to the section. Then a collection $\mathcal{F}$ of facets of $Q$ intersects, if and only if one of the following conditions holds:
\begin{enumerate}
\item[a)] $F_0\notin\mathcal{F}, \{F_1,\ldots ,F_k\}\nsubseteq\mathcal{F}$ and $\bigcap_{F_i\in\mathcal{F}}F_i\neq\emptyset$ in $P$;
\item[b)] $F_0\in \mathcal{F}, \{F_1,\ldots ,F_k\}\nsubseteq\mathcal{F}$ and $\bigcap_{F_i\in\mathcal{F}\setminus F_0}F_i\cap G\neq\emptyset$ in $P$.
\end{enumerate}
\end{prop}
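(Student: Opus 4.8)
The plan is to reduce the statement to facts already established about face shavings, in particular Proposition~\ref{shaveresult} and the combinatorial description of the new facet $F_0$ as $G\times\Delta^{n-k-1}$. First I would set up notation: write $P'$ for the intersection of $P$ with the halfspace bounded by the section that contains $G$, so that $Q$ is obtained from $P$ by replacing the part of $P$ cut off by the section; the facets of $Q$ are exactly the facets of $P$ (some of them truncated) together with the new facet $F_0$. A collection $\mathcal{F}$ of facets of $Q$ has nonempty intersection if and only if the corresponding faces meet inside $Q$, and I would analyze this by splitting on whether $F_0\in\mathcal{F}$ and whether $\{F_1,\dots,F_k\}\subseteq\mathcal{F}$.

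The first key observation is that $\{F_1,\dots,F_k\}\subseteq\mathcal{F}$ is impossible for an intersecting collection in $Q$: the facets $F_1,\dots,F_k$ of $Q$ are precisely the truncations of the corresponding facets of $P$, and their common intersection in $P$ was exactly $G$, which has been shaved away; so in $Q$ they no longer meet (the section separates $G$ from the rest). This disposes of the remaining cases and explains the side conditions in (a) and (b). Next, for the case $F_0\notin\mathcal{F}$: since $\varepsilon$ is chosen small enough that the section separates $G$ from all vertices of $P$ not on $G$, a face $\bigcap_{F_i\in\mathcal{F}}F_i$ of $P$ survives in $Q$ (has a point with $l_G x\le 1-\varepsilon$) precisely when it is not contained in $G$; and since $\{F_1,\dots,F_k\}\nsubseteq\mathcal{F}$ forces this intersection not to lie in $G$, nonemptiness in $Q$ is equivalent to nonemptiness in $P$, giving (a). For the case $F_0\in\mathcal{F}$: here I would use Proposition~\ref{shaveresult}, which identifies $F_0$ combinatorially with $G\times\Delta^{n-k-1}$ and its facets with the facets of $G$; intersecting $F_0$ with the truncated facets $F_i$ ($F_i\in\mathcal{F}\setminus F_0$) corresponds, under that identification, to intersecting the face $\bigcap_{F_i\in\mathcal{F}\setminus F_0}F_i$ with $G$ inside $P$, which is condition (b). One must also check the converse directions—that conditions (a) and (b) are sufficient—but these follow by the same separation argument: if the relevant intersection in $P$ avoids $G$ then a neighborhood of a witness point survives in $Q$, and if it meets $G$ then by continuity there are nearby points on the section, i.e.\ on $F_0$.

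The step I expect to be the main obstacle is the bookkeeping in case (b): one has to be careful that ``$\bigcap_{F_i\in\mathcal{F}\setminus F_0}F_i\cap G\neq\emptyset$ in $P$'' is the right translation of ``the corresponding faces of $Q$ all meet $F_0$,'' and this requires invoking the combinatorial equivalence $F_0\sim G\times\Delta^{n-k-1}$ together with the correspondence between facets of $P'$ and facets of $G$ set up in the proof of Proposition~\ref{shaveresult}. In particular one needs that the faces $F_i\cap F_0$ for $F_i\ni G$ behave correctly (they contain $F_0$-copies of $G$'s facets, not $G$ itself) and that no spurious intersections are created by the product with $\Delta^{n-k-1}$. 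Once that identification is in hand, the proof is a routine case check, and I would present it compactly by treating the two cases in parallel with the shared side condition $\{F_1,\dots,F_k\}\nsubseteq\mathcal{F}$ pulled out front.
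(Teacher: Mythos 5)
Your proposal is correct and follows essentially the same route as the paper: the paper's proof is exactly the local geometric argument that (a) means the intersection in $P$ is nonempty and, by simplicity and $\{F_1,\ldots,F_k\}\nsubseteq\mathcal{F}$, not contained in $G$, hence survives the shaving, while (b) means $\bigcap_{F_i\in\mathcal{F}\setminus F_0}F_i$ meets $G$ but is not contained in it, hence crosses the section and meets $F_0$. The only difference is that you route case (b) through the identification $F_0\sim G\times\Delta^{n-k-1}$ from Proposition~\ref{shaveresult}, where the paper just observes directly that a face meeting $G$ but not contained in $G$ must intersect the nearby section; both versions rest on the same choice of small $\varepsilon$.
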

\begin{proof}
Indeed, condition a) means that facets $\mathcal{F}$ intersect in $P$, and their intersection is not contained in $G$. Since the section is in a small neighborhood of $G$, a part of the intersection stays in $Q$ after shaving. Condition b) means that facets $\mathcal{F}\setminus F_0$ intersect in $P$, and their intersection $\bigcap_{F\in \mathcal{F}\setminus F_0}F_i$ intersects $G$, but is not contained in $G$. Therefore, $\bigcap_{F\in \mathcal{F}\setminus F_0}F_i$ intersects $F_0$.
\end{proof}

\begin{cons*}[Polytope $P_{cut}$]
Let $B_0$ and $B_1$ be connected building sets on $[n+1]$, and $B_0\subset B_1$. The set $B_1$ is partially ordered by inclusion. Let us number all the elements of $B_1\setminus B_0$ by indexes $i$ such a way that $i\leq i'$ provided $S^i\supseteq S^{i'}$. By definition, let's set $P_{cut}$ be the polytope obtained from $P_{B_0}$ by successive shavings faces $G^i=\bigcap_{j=1}^{k_i} F_{S_j^i}$ that correspond to $S^i=S_1^i\sqcup\ldots\sqcup S_{k_i}^i\in B_1\setminus B_0$, starting from $i=1$ (i.e., maximal by inclusion element). It is well defined by proposition \ref{fac}, i.e., facets $S_1^i,\ldots ,S_{k_i}^i$ corresponding to the decomposition of some element of $B_1\setminus B_0$ intersect until their intersection will be shaved off.
\end{cons*}

\begin{lem}\label{main}
Let $B_0$ and $B_1$ be connected building sets on $[n+1]$, and $B_0\subset B_1$. Then $P_{cut}\sim P_{B_1}$.
\end{lem}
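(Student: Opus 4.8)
The plan is to apply Lemma~\ref{norm}: both $P_{cut}$ and $P_{B_1}$ live in $\mathbb{R}^n$ and, since $B_0\subset B_1$, the facets of $P_{B_1}$ indexed by $S\in B_1\setminus[n+1]$ split into the facets already present in $P_{B_0}$ (the ones indexed by $B_0\setminus[n+1]$) and the new facets $F_0^i$ created by shaving $G^i$; under the identification $F_0^i\leftrightarrow F_{S^i}$ the two polytopes have the same set of outer normals. So it suffices to show that for every collection $\mathcal{F}$ of facets of $P_{cut}$, if $\mathcal{F}$ has nonempty intersection in $P_{cut}$ then the corresponding facets of $P_{B_1}$ intersect (the reverse implication then follows by a dimension/count argument, or one can prove the equivalence directly). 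Concretely, I would prove by induction on the number of shavings performed that after shaving off the faces corresponding to $S^1,\dots,S^m$, a collection of facets of the partially-cut polytope intersects if and only if the corresponding elements of $B_0\cup\{S^1,\dots,S^m\}$ satisfy the two conditions of Proposition~\ref{criteria}.

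The inductive step is where Proposition~\ref{fac} does the work. Suppose the claim holds after $m-1$ shavings, and we now shave the face $G^m=\bigcap_{j=1}^{k_m}F_{S_j^m}$, where $S^m=S_1^m\sqcup\dots\sqcup S_{k_m}^m$ is the $B_0$-decomposition of $S^m$, creating the facet $F_0=F_0^m$ identified with $F_{S^m}$. Take a collection $\mathcal{F}$ of facets of the new polytope. If $F_0\notin\mathcal{F}$, Proposition~\ref{fac}(a) says $\mathcal{F}$ intersects iff it intersected before and $\{F_{S_1^m},\dots,F_{S_{k_m}^m}\}\nsubseteq\mathcal{F}$ and the intersection was not contained in $G^m$; I must check this matches Proposition~\ref{criteria} for the index set not containing $S^m$. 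If $F_0\in\mathcal{F}$, Proposition~\ref{fac}(b) says $\mathcal{F}$ intersects iff $\{F_{S_1^m},\dots,F_{S_{k_m}^m}\}\nsubseteq\mathcal{F}$ and $\bigcap_{F\in\mathcal{F}\setminus F_0}F\cap G^m\neq\emptyset$ in the old polytope, which by induction translates into a condition on $\{S^m\}\cup\{\text{elements of }\mathcal{F}\setminus F_0\}$. Here the ordering of the shavings matters: because we shave larger elements first, when we come to $S^m$ every element $S^i$ with $S^i\supsetneq S^m$ already present is a genuine facet, and I need to know that the nesting/disjointness conditions of Proposition~\ref{criteria} involving $S^m$ are exactly captured by "$G^m$ meets the relevant intersection but is not contained in it." The combinatorial heart is the dictionary between conditions 1)--2) of Proposition~\ref{criteria} and the geometric statements of Proposition~\ref{fac}, and this is where properties 2)--4) of the decomposition (Proposition~\ref{pr}) get used: e.g. part~4) is exactly what guarantees the facets $F_{S_1^m},\dots,F_{S_{k_m}^m}$ still intersect in the cut polytope at the moment we shave them (so $P_{cut}$ is well-defined), and part~3) controls how a subset $S'\subset S^m$ of $B_0$ sits inside the decomposition.

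I expect the main obstacle to be precisely this bookkeeping: verifying that the two clauses of Proposition~\ref{criteria} for the enlarged index set $B_0\cup\{S^1,\dots,S^m\}$ correspond term-by-term to "intersection nonempty in $P_{cut}$ after $m$ shavings," uniformly in which subset of the $S_j^i$'s appears in $\mathcal{F}$ and which $F_0^i$'s are included. In particular one must handle the case where $\mathcal{F}$ contains several of the new facets $F_0^{i_1},\dots,F_0^{i_p}$ simultaneously, and show the repeated application of Proposition~\ref{fac}(b) unwinds to condition 2) of Proposition~\ref{criteria} (no disjoint union of chosen elements lies in $B_1$) together with condition 1) (pairwise nested-or-disjoint). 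Once the combinatorial equivalence of the intersection posets is established, Lemma~\ref{norm} immediately gives $P_{cut}\sim P_{B_1}$.
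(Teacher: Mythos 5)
You have the right toolkit (Lemma \ref{norm} together with Proposition \ref{fac} and an induction over the shavings), but the proposal stops exactly where the work is. Your inductive invariant --- after $m$ shavings the facet intersections are the ones Proposition \ref{criteria} prescribes for $B_0\cup\{S^1,\dots,S^m\}$ --- is the right statement, yet you only set up the case analysis and then explicitly defer the hard part: collections containing several of the new facets $F_0^{i_1},\dots,F_0^{i_p}$, where Proposition \ref{fac} must be unwound through several stages whose ``old'' polytope is no longer $P_{B_0}$. That unwinding is the substance of the lemma, and naming it as an expected obstacle is not the same as doing it. Two smaller points: for your invariant even to be stated via Proposition \ref{criteria} you must observe that $B_0\cup\{S^1,\dots,S^m\}$ is a building set, which holds only because the shavings are performed in reverse inclusion order (each $S^i$ is maximal in what remains); and your remark that the reverse implication ``follows by a dimension/count argument'' is unnecessary --- Lemma \ref{norm} needs only one of the two implications, in either direction, so no second argument is required (and it is unclear what the count argument would be).

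The paper's proof shows how to avoid the multi-facet bookkeeping altogether, and this is the idea your plan is missing. It inducts on $N=|B_1|-|B_0|$: since $S^1$ is maximal in $B_1\setminus B_0$, the set $B_0'=B_0\cup S^1$ is again a connected building set, so by the case $N=1$ the polytope after the first shaving is the nestohedron $P_{B_0'}$, and the remaining shavings form the $P_{cut}$-construction for the pair $(B_0',B_1)$, to which the inductive hypothesis applies (shaving depends only on the combinatorics of the pair polytope/face, so one may pass to the combinatorially equivalent model). In the base case $N=1$ one realizes both polytopes by the inequalities of Proposition \ref{basic}, so the section has normal $\sum_{i\in S^1}x_i$ and the outer normals literally agree (a point you assert but should arrange), and Lemma \ref{norm} reduces everything to the single implication ``facets intersect in $P_{B_1}$ $\Rightarrow$ they intersect in $P_{cut}$,'' which is checked directly from Proposition \ref{fac} a)/b) against Proposition \ref{criteria} for $B_0$. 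In this organization there is only ever one new facet to analyze, and every intermediate polytope is itself a nestohedron whose intersection combinatorics Proposition \ref{criteria} supplies for free, so the case you were worried about never arises. As written, your proposal is a plausible plan with its decisive step left open, so it does not yet constitute a proof.
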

\begin{proof}
Prove the lemma by induction on $N=|B_1|-|B_0|$.

Let $N=1$, then $B_1=B_0\cup S^1$. Define the facet correspondence between $P_{cut}$ and $P_{B_1}$: $S\in B_0$ corresponds to $S\in B_1$, the facet obtained by shaving $G^1=\bigcap_{j=1}^{k_1}F_{S_j^1}$ corresponds to $S^1=\bigsqcup_{j=1}^{k_1} S_j^1\in B_1$. Consider the standard geometric realization of $P_{B_0}$ and $P_{B_1}$, described in proposition \ref{basic}. Shaving the face corresponding to $S^1=S_1^1\sqcup\ldots\sqcup S_{k_1}^1$ is equivalent to adding new inequality $\sum_{i\in S^1}x_i\geq\sum_{j=1}^{k_1}|B_0|_{S_j^1}|+\varepsilon$, where $\varepsilon$ is a small positive number. Then polytopes $P_{B_1}$ and $P_{cut}$ have same outer normals to their facets. By lemma \ref{norm}, it is enough to prove that if some collection of facets intersects in $P_{B_1}$, then corresponding facets intersect in $P_{cut}$. Assume that facets $\mathcal{S}$ intersect in $P_{B_1}$ and show that they intersect in $P_{cut}$. First, note that $\{S_1^1,\ldots ,S_{k_1}^1\}\nsubseteq\mathcal{S}$. If $S^1\notin\mathcal{S}$, then facets $\mathcal{S}$ intersect in $P_{B_0}$ and, by a) proposition \ref{fac}, they intersect in $P_{cut}$. If $S^1\in\mathcal{S}$, then for every element $S_i\in\mathcal{S}$ intersecting $S^1$ either $S^1\subset S_i$ or $\exists j: S_i\subset S_j^1$. Therefore, $\bigcap_{S_i\in\mathcal{S}\setminus S^1}F_{S_i}\cap G^1=\bigcap_{S_i\in\mathcal{S}\setminus S^1}F_{S_i}\bigcap_{j=1}^{k_1}F_{S_j^1}\neq\emptyset$ in $P_{B_0}$, and, by b) proposition \ref{fac}, facets $\mathcal{S}$ intersect in $P_{cut}$.

Assume that the result holds for $M<N$ and prove it for $M=N$. Let $|B_1|-|B_0|=N$, then $P_{cut}$ is obtained from $P_{B_0}$ by successive shavings faces corresponding to $S^i=S_1^i\sqcup\ldots\sqcup S_{k_i}^i, i=1,\ldots,N$ in reverse inclusion order. Therefore, $B'_0=B_0\cup S^1$ is a building set. By the inductive assumption, $P_{B'_0}$ is obtained from $P_{B_0}$ by shaving the face corresponding to $S^1$, and $P_{B_1}$ is obtained from $P_{B'_0}$ by successive shavings the faces corresponding to $S^i, i=2,\ldots ,N$. Whence, $P_{cut}\sim P_{B_1}$.
\end{proof}

\begin{lem}\label{step}
Let $B_1$ and $B_3$ be connected building sets on $[n+1]$, $B_1\subsetneq B_3$, and polytopes $P_{B_1}$ and $P_{B_3}$ be flag. Then $\exists B_2: B_1\subsetneq B_2\subseteq B_3$ such that $P_{B_2}$ is obtained from $P_{B_1}$ by successive shavings faces of codimension 2.
\end{lem}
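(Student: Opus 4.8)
The goal is to find an intermediate building set $B_2$ with $B_1 \subsetneq B_2 \subseteq B_3$ so that $P_{B_2}$ arises from $P_{B_1}$ by shaving only codimension-$2$ faces. By Lemma \ref{main}, passing from $P_{B_1}$ to $P_{B_3}$ corresponds to shaving, in reverse-inclusion order, the faces $G^i = \bigcap_{j} F_{S^i_j}$ indexed by the decompositions $B_1(S^i)$ of elements $S^i \in B_3 \setminus B_1$. The codimension of $G^i$ is $k_i = |B_1(S^i)|$, the number of parts in the decomposition of $S^i$ by elements of $B_1$. So the natural candidate is to collect all elements $S \in B_3 \setminus B_1$ whose $B_1$-decomposition has exactly two parts, i.e. $S = S' \sqcup S''$ with $S', S'' \in B_1$ and (by property 4) of Proposition \ref{pr}) no proper sub-union lying in $B_1$ — which for two parts is automatic. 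Set $B_2 = B_1 \cup \{ S \in B_3 \setminus B_1 : |B_1(S)| = 2\}$, or more precisely the closure $\widehat{B_2}$ inside $B_3$, and one must check $B_2$ is a building set with $B_1 \subsetneq B_2 \subseteq B_3$.

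The first thing to verify is that $B_2$ is nonempty strictly above $B_1$, i.e. that \emph{some} element of $B_3 \setminus B_1$ has a two-part $B_1$-decomposition. Take any element $T \in B_3 \setminus B_1$ minimal with respect to inclusion among $B_3 \setminus B_1$; then every proper subset of $T$ that lies in $B_3$ already lies in $B_1$. Its decomposition $B_1(T) = S_1 \sqcup \dots \sqcup S_k$ has $k \geq 2$ (else $T \in B_1$). I claim one can choose $T$ so that $k = 2$: using flagness of $P_{B_3}$ together with Proposition \ref{criteria} applied to the pairwise-disjoint family $S_1, \ldots, S_k$ (whose facets would pairwise intersect in $P_{B_3}$), condition 2) forces $S_1 \sqcup \dots \sqcup S_k = T \in B_3$ to be the \emph{only} element of $B_3$ that this family assembles into; but intermediate unions $S_i \sqcup S_j$ either lie in $B_1 \subseteq B_3$ (contradicting the minimality/flag analysis) or give a smaller element of $B_3 \setminus B_1$ with fewer parts, and iterating lands on a two-part decomposition. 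This is essentially the same argument as the "$k=2$" step in the proof of Lemma \ref{Bcube} (where flagness forces the maximal disjoint decomposition of $[n+1]$ to have exactly two parts), transplanted to an arbitrary $T$.

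Next I would check that shaving the faces $G^S = F_{S'} \cap F_{S''}$ for $S = S' \sqcup S'' \in B_2 \setminus B_1$, in reverse-inclusion order, produces a sequence of \emph{legal} codimension-$2$ shavings and that the result is $P_{B_2}$. Legality — that $F_{S'}$ and $F_{S''}$ still meet at the moment we shave $G^S$, and that $G^S$ really has codimension $2$ at that stage — follows from Proposition \ref{fac} exactly as in the $P_{cut}$ construction and in Lemma \ref{main}: one only ever shaves intersections dictated by decompositions, and at each stage the relevant two facets intersect precisely because no proper nonempty sub-union of $\{S', S''\}$ lies in the current building set. Since every $S$ we shave for has $|B_1(S)| = 2$, each $G^S$ has codimension $2$ in the current polytope (its dimension is $n-2$ regardless of which earlier shavings were performed, because codimension-$2$ shavings of a facet only cut that facet along a codimension-$1$ section, cf. Propositions \ref{shaveresult} and \ref{fac}). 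Then Lemma \ref{main}, applied to the pair $B_1 \subset B_2$, identifies the resulting polytope with $P_{B_2}$ — here one needs that the decomposition of every $S \in B_2 \setminus B_1$ by elements of $B_1$ still has two parts, which holds by property 2) of Proposition \ref{pr} and the choice of $B_2$.

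\textbf{Main obstacle.} The delicate point is the second step: showing that the set of two-part-decomposable elements of $B_3 \setminus B_1$, together with $B_1$, actually forms a building set $B_2$ (closure under union of intersecting members) \emph{and} that it is strictly larger than $B_1$. Closure is where flagness of $P_{B_3}$ is genuinely used via Proposition \ref{criteria}: if $S = S'\sqcup S''$ and $T = T'\sqcup T''$ are two-part-decomposable and $S \cap T \neq \emptyset$, one must argue that $S \cup T$ is again two-part-decomposable over $B_1$ — and if it is not, one needs to show that the offending intermediate unions that obstruct a two-part decomposition must themselves already lie in $B_1$, which is exactly where the flag hypothesis (no pairwise-intersecting family of facets with empty total intersection) and the building-set axiom on $B_1$ combine. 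I expect this bookkeeping — tracking how the parts of $B_1(S)$ and $B_1(T)$ merge and ruling out "accidental" three-part obstructions using condition 2) of Proposition \ref{criteria} — to be the technical heart of the lemma, with everything else being a routine invocation of Lemma \ref{main} and Proposition \ref{fac}.
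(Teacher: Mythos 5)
Your first step---that a minimal element $T\in B_3\setminus B_1$ has a two-part $B_1$-decomposition---is essentially right, and in fact needs no iteration: if $B_1(T)=S_1\sqcup\dots\sqcup S_k$ with $k\ge 3$, then for every pair $S_i\sqcup S_j\subsetneq T$ minimality of $T$ together with property 4) of Proposition \ref{pr} forces $S_i\sqcup S_j\notin B_3$, so by Proposition \ref{criteria} the facets $F_{S_1},\dots,F_{S_k}$ pairwise intersect in $P_{B_3}$ while their total intersection is empty (their union is $T\in B_3$), contradicting flagness. The genuine gap is in your choice of $B_2$. The set $B_1\cup\{S\in B_3\setminus B_1:\ |B_1(S)|=2\}$ is in general \emph{not} a building set, and the closure repair destroys the codimension-2 property; the ``bookkeeping'' you defer (that a union of two intersecting two-part-decomposable elements is again two-part-decomposable over $B_1$) is simply false. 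Concretely, let $B_1$ be the graphical building set of the star on $[4]$ with edges $\{1,4\},\{2,4\},\{3,4\}$ (connected, and $P_{B_1}$ is flag as a graph-associahedron), and let $B_3=2^{[4]}$, so $P_{B_3}=Pe^3$ is flag. Then $\{1,2\}$ and $\{2,3\}$ lie in $B_3\setminus B_1$ and each has a two-part $B_1$-decomposition, but their union $\{1,2,3\}$ has $B_1$-decomposition $\{1\}\sqcup\{2\}\sqcup\{3\}$ with three parts. Hence your $B_2$ without closure violates the building-set axiom, while $\widehat{B_2}$ contains $\{1,2,3\}$; in the $P_{cut}$ construction of Lemma \ref{main} (shavings in reverse-inclusion order, decompositions taken with respect to the base set $B_1$) this element is shaved first, as a face of codimension 3, so your appeal to Lemma \ref{main} does not deliver only codimension-2 shavings.

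The paper avoids this entirely by adjoining a \emph{single} minimal element: pick $S$ minimal in $B_3\setminus B_1$, use flagness of $P_{B_3}$ to write $S=I\sqcup J$ with $I,J\in B_3$, note $I,J\in B_1$ by minimality, and set $B_2=\widehat{B_1\cup S}\subseteq B_3$. The decisive point, which your proposal lacks, is the explicit computation of this closure: $\widehat{B_1\cup S}=B_1\cup\{S_1\sqcup S_2:\ S_i\in B_1,\ I\subset S_1,\ J\subset S_2\}$, so \emph{every} element of $B_2\setminus B_1$ visibly has a two-part $B_1$-decomposition and Lemma \ref{main} then produces only codimension-2 shavings. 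Elements like $\{1,2,3\}$ in the example above are not skipped but postponed: they acquire two-part decompositions relative to the enlarged building set and are handled in later applications of the lemma, which is exactly how the theorem iterates it. So the correct strategy is ``one minimal element per step, then close up,'' not ``all two-decomposable elements at once.''
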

\begin{proof}
Pick $B_2=\widehat{B_1\cup S}$, where $S$ is the minimal by inclusion element of $B_3\setminus B_1$. Since $S\in B_3$ and $P_{B_3}$ is flag, there exist $I,J\in B_3: I\sqcup J=S$. From the chose of $S$ we have $I,J\in B_1$. It is easy to check that $B_1\cup\{S'=S_1\sqcup S_2, S_i\in B_1, I\subset S_1, J\subset S_2\}$ is the minimal building set containing $B_1\cup S$. Then, the decomposition of any element of $B_2\setminus B_1$ consists of exactly two elements. Therefore, obtaining $P_{B_2}$ from $P_{B_1}$ only faces of codimension 2 will be shaved off.
\end{proof}
\begin{rem}
By lemma \ref{flagshave}, the polytope $P_{B_2}$ is flag.
\end{rem}

\begin{thm}
If $B$ is a connected building set, and $P_B$ is flag, then
\begin{enumerate}
\item[1)] $P_B$ can be obtained from $I^n$ by successive shavings faces of codimension 2.
\item[2)] $Pe^n$  can be obtained from $P_B$ by successive shavings faces of codimension 2.
\end{enumerate}
\end{thm}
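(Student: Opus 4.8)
The plan is to assemble the theorem directly from the lemmas already proved, treating the two parts symmetrically by viewing both as instances of the chain-building procedure. For part 1), I would start with the building set $B_0\subset B$ provided by Lemma~\ref{Bcube}, so that $P_{B_0}\sim I^n$. The goal is then to produce a chain $B_0=B^{(0)}\subsetneq B^{(1)}\subsetneq\dots\subsetneq B^{(N)}=B$ of connected building sets such that each $P_{B^{(i+1)}}$ is obtained from $P_{B^{(i)}}$ by successive shavings of faces of codimension~$2$. The construction of such a chain is exactly the content of Lemma~\ref{step}: as long as $B^{(i)}\subsetneq B$, I apply Lemma~\ref{step} with $B_1=B^{(i)}$ and $B_3=B$ to obtain $B^{(i+1)}=B_2$ with $B^{(i)}\subsetneq B^{(i+1)}\subseteq B$ and with $P_{B^{(i+1)}}$ obtained from $P_{B^{(i)}}$ by shavings of codimension~$2$ faces. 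By the Remark following Lemma~\ref{step} (an application of Lemma~\ref{flagshave}), each intermediate polytope $P_{B^{(i+1)}}$ is again flag, so the hypotheses of Lemma~\ref{step} are met at every stage. Since each step strictly increases the cardinality of the building set and all building sets live inside $2^{[n+1]}$, the process terminates after finitely many steps at $B^{(N)}=B$. Concatenating the shaving sequences yields $P_B$ obtained from $I^n$ by successive shavings of faces of codimension~$2$, which is part 1).

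For part 2), I would note that $Pe^n=P_{2^{[n+1]}}$, that $2^{[n+1]}$ is connected, and that the permutohedron is flag (it is the graph-associahedron of the complete graph, or alternatively one checks flagness directly from Proposition~\ref{criteria}). Since $B\subseteq 2^{[n+1]}$, I apply the same iteration of Lemma~\ref{step}, now with the ambient building set $B_3=2^{[n+1]}$: build a chain $B=C^{(0)}\subsetneq C^{(1)}\subsetneq\dots\subsetneq C^{(M)}=2^{[n+1]}$ with each $P_{C^{(i+1)}}$ obtained from $P_{C^{(i)}}$ by codimension~$2$ shavings, and with each $P_{C^{(i)}}$ flag by the Remark. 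If $B=2^{[n+1]}$ already, there is nothing to shave. Concatenating gives $Pe^n$ obtained from $P_B$ by successive shavings of faces of codimension~$2$, which is part 2).

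The only genuine subtlety — and the step I would treat most carefully — is the bookkeeping that guarantees the shaving sequences produced by successive applications of Lemma~\ref{step} can be concatenated into one legitimate sequence of codimension~$2$ shavings. Lemma~\ref{main} (via the $P_{cut}$ construction) tells us that for $B'\subsetneq B''$ connected, $P_{B''}$ is reached from $P_{B'}$ by shaving, in reverse inclusion order, the faces $G^i=\bigcap_j F_{S_j^i}$ corresponding to the decompositions of the elements of $B''\setminus B'$; and when $B''=B^{(i+1)}$, $B'=B^{(i)}$ as chosen by Lemma~\ref{step}, every such decomposition has exactly two parts, so every $G^i$ has codimension~$2$. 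One must check that after performing the shavings of one block $B^{(i+1)}\setminus B^{(i)}$, the faces prescribed for the next block $B^{(i+2)}\setminus B^{(i+1)}$ are still present (so the next shavings are well-defined) — but this is guaranteed precisely because $P_{cut}$ for the pair $(B^{(i)}, B^{(i+2)})$ is well-defined by Proposition~\ref{fac}, and the shavings for $B^{(i+2)}\setminus B^{(i)}$ in reverse inclusion order refine those for $B^{(i+1)}\setminus B^{(i)}$ followed by those for $B^{(i+2)}\setminus B^{(i+1)}$, as the induction in Lemma~\ref{main} shows. So the real work is simply invoking Lemmas~\ref{Bcube}, \ref{main}, \ref{step} and \ref{flagshave} in the right order; no new estimate is needed.
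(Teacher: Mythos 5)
Your proposal is correct and follows essentially the same route as the paper: pick $B_0\subset B$ with $P_{B_0}\sim I^n$ via Lemma~\ref{Bcube}, then iterate Lemma~\ref{step} (with flagness of the intermediate polytopes supplied by Lemma~\ref{flagshave}) along the chain $B_0\subset\dots\subset B\subset\dots\subset 2^{[n+1]}$, which gives both parts at once. The extra bookkeeping you supply (flagness of $Pe^n$, termination, and concatenation of the shaving sequences via Lemma~\ref{main} and Proposition~\ref{fac}) is exactly what the paper leaves implicit, so no discrepancy arises.
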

\begin{proof}
Pick $B_0\subset B$ such that $P_{B_0}$ is equivalent to $I^n$, then $B_0\subset B\subset 2^{[n+1]}$. Iterating lemma \ref{step}, we get the sequence of building sets $B_0\subset\dots\subset B_N=B\subset\dots\subset 2^{[n+1]}$ and finish the proof.
\end{proof}

Polynomials $\gamma(Pe^n)$ satisfy the differential equation (see \cite{B}), whence follows a simple recursion on $\gamma_i(Pe^n)$ that particularly gives their nonnegativity. The next theorems follow from obtained results.
\begin{thm}
For any flag $n$-dimmensional nestohedron $P_B$ we have $0\leq\gamma_i(P_B)\leq\gamma_i(Pe^n)$.
\end{thm}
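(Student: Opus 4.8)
The plan is to deduce both inequalities directly from the preceding theorem together with the effect of a codimension $2$ shaving on the $\gamma$-polynomial. For the lower bound, the preceding theorem (part 1) says that $P_B$ is obtained from $I^n$ by successive shavings of faces of codimension $2$, i.e. $P_B\in\mathcal{P}^{cube}$; hence Proposition \ref{cubering} gives $\gamma_i(P_B)\geq 0$ at once.

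For the upper bound I would use the chain of connected building sets $B_0\subset\dots\subset B_N=B\subset\dots\subset 2^{[n+1]}$ furnished in the proof of the preceding theorem (obtained by iterating Lemma \ref{step}, here with the ambient flag polytope $P_{2^{[n+1]}}=Pe^n$, which is flag as a graph-associahedron). Along this chain each step is a shaving of some face $G$ of codimension $2$, and by the corollary to the $\gamma$-formula this changes the $\gamma$-polynomial by $\gamma(Q)=\gamma(P)+\tau\,\gamma(G)$, i.e. $\gamma_i(Q)=\gamma_i(P)+\gamma_{i-1}(G)$. So it suffices to know $\gamma_{j}(G)\geq 0$ for every face $G$ shaved off in the chain. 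But the proof of Proposition \ref{cubering} shows that every facet of a polytope in $\mathcal{P}^{cube}$ again lies in $\mathcal{P}^{cube}$, and iterating this, every face does; since every intermediate polytope in the chain from $I^n$ to $Pe^n$ lies in $\mathcal{P}^{cube}$, the shaved face $G$ at each step lies in $\mathcal{P}^{cube}$, whence $\gamma(G)\geq 0$ by Proposition \ref{cubering}. Therefore each $\gamma_i$ is non-decreasing along the chain, and restricting attention to the portion of the chain running from $B$ to $2^{[n+1]}$ yields $\gamma_i(P_B)\leq\gamma_i(Pe^n)$.

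There is no serious obstacle here, since the machinery is already in place; the only point needing a little care is the non-negativity $\gamma(G)\geq 0$ of the successively shaved faces, and this is exactly what the inductive argument in the proof of Proposition \ref{cubering} provides (each such $G$ is itself flag by Lemma \ref{flagshave} and lies in $\mathcal{P}^{cube}$). One should also note in passing that the chain really does terminate at $Pe^n$: $2^{[n+1]}$ is a connected building set, $P_{2^{[n+1]}}=Pe^n$, and as long as the current building set is properly contained in $2^{[n+1]}$ Lemma \ref{step} produces a further proper extension inside $2^{[n+1]}$ by a codimension $2$ shaving, so after finitely many steps one reaches $2^{[n+1]}$.
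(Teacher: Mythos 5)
Your proposal is correct and is essentially the argument the paper intends: the paper gives no explicit proof (it says the theorem ``follows from obtained results''), and your deduction --- lower bound from $P_B\in\mathcal{P}^{cube}$ via Proposition \ref{cubering}, upper bound by following the chain of codimension-$2$ shavings from $B$ up to $2^{[n+1]}$ and using $\gamma(Q)=\gamma(P)+\tau\gamma(G)$ with $\gamma(G)\geq 0$ --- is exactly the intended one. Your observation that each shaved face $G$ lies in $\mathcal{P}^{cube}$ (as a facet of a facet) is the same mechanism already used inside the proof of Proposition \ref{cubering}, so no new ingredient is needed.
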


\begin{thm}
If $B_1$ and $B_2$ are connected building sets on $[n+1]$, $B_1\subset B_2$, and $P_{B_i}$ are flag, then $\gamma_i(P_{B_1})\leq\gamma_i(P_{B_2})$.
\end{thm}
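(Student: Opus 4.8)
The plan is to derive this monotonicity statement as an immediate consequence of the shaving technique developed above, in exactly the same spirit as the preceding theorem. The key point is that everything needed is already in place: Lemma \ref{step} (iterated) lets us interpolate between $B_1$ and $B_2$ by building sets each of which is obtained from the previous one by shaving faces of codimension $2$, and the Corollary to the $\gamma$-formula tells us each such shaving changes the $\gamma$-polynomial by $+\tau\gamma(G)$ for the shaved face $G$.

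First I would set up the interpolation. Since $B_1\subsetneq B_2$ are connected building sets on $[n+1]$ with both $P_{B_1}$ and $P_{B_2}$ flag, apply Lemma \ref{step} with $B_3=B_2$ to obtain $B_1\subsetneq B_2'\subseteq B_2$ with $P_{B_2'}$ obtained from $P_{B_1}$ by successive shavings of faces of codimension $2$; by the Remark following Lemma \ref{step}, $P_{B_2'}$ is again flag. Iterating (replacing $B_1$ by $B_2'$ and noting $|B_2|-|B_1|$ strictly decreases), we get a chain $B_1=C_0\subsetneq C_1\subsetneq\dots\subsetneq C_m=B_2$ of connected building sets, all flag, such that each $P_{C_{j+1}}$ is obtained from $P_{C_j}$ by a sequence of codimension-$2$ shavings. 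It therefore suffices to prove the inequality $\gamma_i(P)\le\gamma_i(Q)$ whenever $Q$ is obtained from a simple flag polytope $P$ by shaving a single face $G$ of codimension $2$.

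Next I would close that reduction using the Corollary: for a codimension-$2$ shaving, $\gamma(Q)(\tau)=\gamma(P)(\tau)+\tau\,\gamma(G)(\tau)$, where $G=F_1\cap F_2$ is itself a face of the flag polytope $P$, hence flag and of dimension $n-2$. By Lemma \ref{flagshave} all intermediate polytopes stay simple and flag, and $G$ is a simple flag polytope of dimension $n-2$; by Proposition \ref{cubring}'s proof idea — more directly, by induction on dimension together with the fact that a flag polytope arising in this process lies in $\mathcal P^{cube}$ once $P$ does — one has $\gamma_i(G)\ge0$. Actually, to avoid circularity I would instead invoke Proposition \ref{cubring} only when applicable and otherwise argue: in our situation $P_{B_1}\in\mathcal P^{cube}$ by the previous theorem, so every $C_j$ gives a polytope in $\mathcal P^{cube}$, hence every face $G$ appearing is in $\mathcal P^{cube}$ and satisfies $\gamma_i(G)\ge0$ by Proposition \ref{cubring}. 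Comparing coefficients in $\gamma(Q)=\gamma(P)+\tau\gamma(G)$ gives $\gamma_i(Q)=\gamma_i(P)+\gamma_{i-1}(G)\ge\gamma_i(P)$ for all $i$. Summing along the chain $C_0,\dots,C_m$ yields $\gamma_i(P_{B_1})\le\gamma_i(P_{B_2})$.

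I do not expect a serious obstacle here; the result is essentially a bookkeeping corollary of Lemma \ref{step}, Lemma \ref{flagshave}, and the $\gamma$-change formula. The only point that needs care is the nonnegativity $\gamma_i(G)\ge0$ for the shaved faces: one must make sure the faces $G$ that get shaved are themselves flag polytopes in $\mathcal P^{cube}$ (equivalently, have nonnegative $\gamma$-vector), which follows because in our chain $P_{B_1}$ is already built from a cube by codimension-$2$ shavings, so by the argument in the proof of Proposition \ref{cubring} all its faces — and all faces appearing after further codimension-$2$ shavings — lie in $\mathcal P^{cube}$. With that in hand the inequality is term-by-term and the proof is short.
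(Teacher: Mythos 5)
Your argument is correct and is exactly the route the paper intends: it states that this theorem ``follows from obtained results,'' namely iterating Lemma \ref{step} from $B_1$ to $B_2$, applying the codimension-2 formula $\gamma(Q)=\gamma(P)+\tau\gamma(G)$, and using that all intermediate polytopes (and hence the shaved faces $G$) lie in $\mathcal{P}^{cube}$ so that $\gamma(G)\geq 0$ by Proposition \ref{cubering}. Your write-up simply makes this bookkeeping explicit, including the one point needing care (nonnegativity of $\gamma(G)$ via faces of $\mathcal{P}^{cube}$ polytopes remaining in $\mathcal{P}^{cube}$), so it matches the paper's approach.
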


We conclude that Gal's conjecture holds for all nestohedra.

\section{Geometric realization of flag nestohedra in $\mathbb{R}^n$}
Now, let's realize an arbitrary flag nestohedron $P_B$ from the standard cube in $\mathbb{R}^n$ by sequence of shavings.

Let $B_0\subset B$ be the building set corresponding to the cube $I^n$. Identify elements $S\in B_0\setminus[n+1]$ with inequalities $l_S x\leq b_S$ such that $l_S=\pm e_i, b_S=1$. Here $\mathbb{R}^n$ and ${\mathbb{R}^n}^\ast$ are identified with respect to the scalar product.

Now construct a realization of $P_B$ from this realization of the cube $P_{B_0}$. Let $B_0\subset B_1\subset\dots\subset B_N=B$ such that $B_i=\widehat{B_{i-1}\cup S^i}$, where $S^i$ are chosen as in lemma \ref{Bcube}, minimal by inclusion in $B\setminus B_{i-1}$. Order the elements of each $B_i\setminus B_{i-1}$ reversing inclusion, and set each element of $B_i\setminus B_{i-1}$ higher than all the elements of $B_{i-1}$. So we number all $S_j\in B\setminus B_0$ starting from $j=1$. Define inequalities for $P_B$ recursively. Every $S_j\in B_i\setminus B_{i-1}$ has a unique decomposition by elements of $B_{i-1}$: $S_j=S_{j_1}\sqcup S_{j_2}$.
\begin{align}
\intertext{By definition set:}
l_{S_j}&=l_{S_{j_1}} + l_{S_{j_2}}\\
b_{S_j}&=b_{S_{j_1}} + b_{S_{j_2}} - \varepsilon_j
\end{align}
Here $\varepsilon_j>0$ is picked small enough so that adding the inequality $l_{S_j}x\leq b_{S_j}$ determines shaving the face $G^j=F_{S_{j_1}}\cap F_{S_{j_2}}$.

By the decomposition property $B_0(S)=B_0(B_1(S)), S\in B_1$, we get the explicit formula for $l_S$:
$$l_S=\sum_{j=1}^k l_{S_j},\text{ where $S=S_1\sqcup\ldots\sqcup S_k$ is the decomposition of $S$ by elements of $B_0$}$$
So, we can calculate important for toric topology matrix of outer normals.

\begin{prop}
Coordinates of the vectors $l_S$ are $0,\pm1$.
\end{prop}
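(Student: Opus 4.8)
The plan is to prove the claim by induction on the index $j$ labelling the elements $S_j \in B \setminus B_0$, showing that every $l_{S_j}$ is a $0,\pm 1$ vector, and moreover that for each coordinate direction $e_i$ the collection of those $S_j$ in a fixed building set $B_m$ whose normal has a nonzero $i$-th coordinate are \emph{nested} (pairwise comparable by inclusion or disjoint as subsets of $[n+1]$), with signs consistent along chains. The base case is the cube $P_{B_0}$, where by construction $l_S = \pm e_i$ for $S \in B_0 \setminus [n+1]$, so the claim is immediate; the nestedness bookkeeping also holds trivially since each facet normal involves exactly one coordinate.

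For the inductive step, suppose the claim holds for all facets of $P_{B_{m-1}}$ and consider $S_j \in B_m \setminus B_{m-1}$ with decomposition $S_j = S_{j_1} \sqcup S_{j_2}$, $S_{j_1}, S_{j_2} \in B_{m-1}$, so that $l_{S_j} = l_{S_{j_1}} + l_{S_{j_2}}$. The key point is that $S_{j_1}$ and $S_{j_2}$ are \emph{disjoint} as subsets of $[n+1]$. By the explicit formula $l_S = \sum_{t} l_{S_t}$ summing over the decomposition of $S$ into $B_0$-pieces, and since the $B_0$-pieces of $S_{j_1}$ are disjoint from the $B_0$-pieces of $S_{j_2}$ (as subsets of $[n+1]$, by property 3) of Proposition \ref{pr} applied to each side), the two vectors $l_{S_{j_1}}$ and $l_{S_{j_2}}$ have disjoint supports among the coordinates — no cancellation occurs, and the sum remains a $0,\pm1$ vector. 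Here one uses that the original cube realization has $l_S$ for $S \in B_0$ supported on exactly one coordinate (corresponding to whichever element of $[n+1]$ that singleton piece "is"), so that the supports of $l_{S_{j_1}}$ and $l_{S_{j_2}}$ are contained in disjoint index sets determined by $S_{j_1}$ and $S_{j_2}$ respectively.

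I expect the main obstacle to be making the "disjoint support" argument fully rigorous: one must carefully track how the coordinates of $\mathbb{R}^n$ correspond to elements of the $B_0$-decompositions, and verify that for every $S \in B$ the support of $l_S$ (as a subset of the $n$ coordinate directions) is governed by the $B_0$-decomposition $S = S_1 \sqcup \cdots \sqcup S_k$ in such a way that $\supp(l_S) = \bigsqcup_t \supp(l_{S_t})$ is literally a disjoint union. This requires showing inductively that $\supp(l_{S_t}) \subseteq$ "the coordinates attached to $S_t$" and that these coordinate sets are pairwise disjoint when the $S_t$ are disjoint in $[n+1]$ — which in turn follows because each $B_0$-piece, being a facet of the cube, contributes exactly one coordinate lying "inside" that piece. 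Once this is set up, the induction closes cleanly: $l_{S_j} = l_{S_{j_1}} + l_{S_{j_2}}$ is a sum of two $0,\pm1$ vectors with disjoint supports, hence again a $0,\pm1$ vector, and the bookkeeping hypothesis on supports propagates to $B_m$ via properties 2) and 3) of Proposition \ref{pr}.
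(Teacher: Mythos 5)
Your overall plan (induct along the order in which the elements of $B\setminus B_0$ are added and show that the two summands in $l_{S_j}=l_{S_{j_1}}+l_{S_{j_2}}$ have disjoint supports) is the same as the paper's, but the justification you give for the disjointness of supports is false, and this is precisely the nontrivial point. You argue that since $S_{j_1}$ and $S_{j_2}$ (hence their $B_0$-pieces) are disjoint as subsets of $[n+1]$, their normals have disjoint supports, because each $B_0$-piece ``contributes exactly one coordinate lying inside that piece.'' This already fails on the cube itself: the building set $B_0$ of Lemma \ref{Bcube} is a laminar, binary-tree-like family, and the two facets of $I^n$ carrying the normals $+e_i$ and $-e_i$ correspond to two \emph{disjoint} elements $S',S''\in B_0$ whose union lies in $B_0$ (the two children of a common node). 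In the paper's $3$-dimensional example, $F_{\{1\}}$ and $F_{\{2\}}$ are opposite facets of the cube, so $l_{\{1\}}=-l_{\{2\}}=\pm e_1$: the sets $\{1\}$ and $\{2\}$ are disjoint in $[4]$ yet have identical support. Consequently there is no assignment of each coordinate to an element of $[n+1]$ ``inside'' the corresponding piece (the coordinate of a cube facet is shared with its opposite facet, which is disjoint from it), and your bookkeeping hypothesis (``the sets whose normal has nonzero $i$-th coordinate are pairwise comparable or disjoint'') is satisfied by exactly the configuration you would need to exclude, so it cannot deliver the no-doubling conclusion.

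The condition that actually forces $\supp l_{S'}\cap\supp l_{S''}=\emptyset$ is not $S'\cap S''=\emptyset$ but $S'\sqcup S''\notin B$, i.e.\ that the facets $F_{S'}$ and $F_{S''}$ intersect. This is the invariant the paper propagates by induction: on the cube two intersecting facets are not opposite and hence use different coordinates, and the invariant passes from $B_{j-1}$ to $B_j=B_{j-1}\cup S_j$ because $S_j\sqcup S\notin B_j$ implies $S_{j_1}\sqcup S\notin B_{j-1}$ and $S_{j_2}\sqcup S\notin B_{j-1}$ (otherwise, say, $S_{j_1}\sqcup S\in B_{j-1}$ would meet $S_j\in B_j$ and force $S_j\sqcup S\in B_j$). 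The pieces of the decomposition of any $S\in B\setminus B_0$ do satisfy this stronger pairwise condition — for $k\ge 3$ by property 4) of Proposition \ref{pr}, and for $k=2$ because otherwise $S$ would lie in the smaller building set — which is why the disjoint-support statement you want is true; but your argument, as written, derives it from mere disjointness in $[n+1]$, which is not sufficient. Replacing your ``coordinates inside the piece'' bookkeeping by the invariant ``$S'\sqcup S''\notin B_j$ implies disjoint supports'' repairs the proof and is essentially what the paper does.
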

\begin{proof}
Show that for $S',S''$: $S'\sqcup S''\notin B$ is fulfilled $\supp l_{S'}\cap\supp l_{S''}=\emptyset$, and the result will follow from (1). From the given construction we have the sequence of building sets $B_0\subset\ldots\subset B_M=B$, where $B_j=B_{j-1}\cup S_j$ (not $\widehat{B_{i-1}\cup S^i}$, as above!). The proof is by induction on the index $j$ of $B_j$. For the standard cube $P_{B_0}$ it is true, since facets $S'$ and $S''$ intersect. Assume that it is true for $B_{j-1}$ and prove it for $B_j$. The property holds for all the elements of $B_{j-1}\subset B_j$. Check the property for the new element $S_j$. Let $S_j=S_{j_1}\sqcup S_{j_2}$, where $S_{j_1},S_{j_2}\in B_{j-1}$. Then, if $S_j\sqcup S\notin B_j, S\in B_{j-1}$, then $S_{j_1}\sqcup S\notin B_{j-1}$ and $S_{j_2}\sqcup S\notin B_{j-1}$, then, by the inductive assumption, $\supp l_{S_j}\cap\supp l_{S}=(\supp l_{S_{j_1}}\cup\supp l_{S_{j_2}})\cap\supp l_{S}=\\=(\supp l_{S_{j_1}}\cap\supp l_S)\cup(\supp l_{S_{j_2}}\cap\supp l_S)=\emptyset$.
\end{proof}

\begin{prop}
Described realization of $P_B$ is Delzant in the standard basis of $\mathbb{R}^n$.
\end{prop}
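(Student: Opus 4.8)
The plan is to verify the Delzant condition directly at each vertex of the constructed realization of $P_B$, using the explicit formula $l_S=\sum_{j=1}^k l_{S_j}$ with $\{S_1,\ldots,S_k\}=B_0(S)$ together with the previous proposition, which guarantees that all coordinates of every $l_S$ lie in $\{0,\pm1\}$ and that $\supp l_{S'}\cap\supp l_{S''}=\emptyset$ whenever $S'\sqcup S''\notin B$. By Lemma~\ref{norm} the constructed polytope is combinatorially equivalent to $P_B$, so its vertices correspond to maximal intersecting collections of facets; fix a vertex $w$ and let $F_{S_1},\ldots,F_{S_n}$ be the $n$ facets through it, with outer normals $l_{S_1},\ldots,l_{S_n}$. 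It suffices to show these $n$ vectors form a $\mathbb Z$-basis of $\mathbb Z^n$, i.e. that the $n\times n$ integer matrix with rows $l_{S_i}$ has determinant $\pm1$.

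First I would use Proposition~\ref{criteria} to describe the combinatorial structure of the collection $\{S_1,\ldots,S_n\}$: by condition 1) any two of them are nested or disjoint, so the $S_i$ form a forest under inclusion; by condition 2) no disjoint subcollection has union in $B$. The key structural observation is that the minimal (leafwise) elements among the $S_i$, being pairwise disjoint with no sub-union in $B$, cannot partition $[n+1]$ — indeed if they did, that partition would be a decomposition witnessing that $[n+1]\notin B$ only if $k\ge 2$, but more to the point one checks via condition 2) and $[n+1]\in B$ that exactly one element of $[n+1]$, say $n+1$, is covered by no $S_i$ at all; this is the standard fact that makes $|\{S_i\}|=n$ consistent. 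Then I would argue by induction on the inclusion forest: after a change of variables identifying coordinates with the ground set $[n+1]$ modulo the relation killing $e_{n+1}$ (the basis $\{e_i-e_{n+1}\}$ of Proposition on Delzantness of $P_B$), each $l_{S}$ with $S$ minimal is, by the support-disjointness and $0,\pm1$ property, supported on $S$ with a controlled sign pattern inherited from the cube's normals; peeling off a minimal $S_i$ and quotienting by its span reduces $[n+1]$, $B$, and the collection to a strictly smaller instance of the same situation, and the determinant multiplies by $\pm1$.

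The cleaner route, which I would try to make the main line of the argument, is to avoid re-deriving the forest structure from scratch and instead transport the Delzant property along the shavings. Each $P_{B_j}$ is obtained from $P_{B_{j-1}}$ by shaving the codimension-$2$ face $G^j=F_{S_{j_1}}\cap F_{S_{j_2}}$, and the new normal is $l_{S_j}=l_{S_{j_1}}+l_{S_{j_2}}$. At a vertex of the new polytope lying on $F_{S_j}$, the $n$ normals are $l_{S_j}$ together with $n-1$ old normals; exactly one of $l_{S_{j_1}},l_{S_{j_2}}$ is among those old normals present at the adjacent vertex of $P_{B_{j-1}}$ from which this vertex is "split off", and the elementary column operation replacing $l_{S_j}$ by $l_{S_j}-l_{S_{j_i}}=l_{S_{j_{3-i}}}$ turns the normal matrix at the new vertex into the normal matrix at an old Delzant vertex, hence determinant $\pm1$ is preserved. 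Vertices not on $F_{S_j}$ are untouched. Since the cube $P_{B_0}$ with normals $\pm e_i$ is visibly Delzant in the standard basis, induction on $j$ finishes the proof. The main obstacle is the bookkeeping in this inductive step: one must check carefully that at every vertex of $P_{B_j}$ on the new facet, precisely one of the two summand-normals $l_{S_{j_1}}, l_{S_{j_2}}$ occurs among the remaining $n-1$ facets through that vertex — this follows from Proposition~\ref{shaveresult}, since the new facet is $G^j\times\Delta^{1}$ and its vertices are pairs (vertex of $G^j$, endpoint of $I$), the two endpoints corresponding to the two facets $F_{S_{j_1}}$ and $F_{S_{j_2}}$ of $P_{B_{j-1}}$ — but spelling this out compatibly with the linear-algebra claim is where the real care is needed.
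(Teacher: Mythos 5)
Your ``cleaner route'' is essentially the paper's own proof: induction on the number of shavings, with the observation that each new vertex on the shaved facet $F_{S_j}$ has normals $l_{S_j},l_{S_{j_i}},l_{S_{j_3}},\ldots,l_{S_{j_n}}$ for exactly one $i\in\{1,2\}$, so the column operation $l_{S_j}\mapsto l_{S_j}-l_{S_{j_i}}=l_{S_{j_{3-i}}}$ reduces the determinant to that of an old Delzant vertex of $P_{j-1}$, giving $\pm1$, with the cube as the base case. The first, direct-verification sketch is unnecessary (and rather hand-wavy), but since you make the inductive argument the main line, your proof matches the paper's.
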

\begin{proof}
Show that vectors $l_{S_{j_1}},\ldots ,l_{S_{j_n}}$ form a $\mathbb{Z}$ basis of $\mathbb{Z}^n$ provided $S_{j_1},\ldots ,S_{j_n}$ intersect in $P_B$. Prove it by induction on the number of shaved off faces or equivalent added inequalities. For the standard cube it is true. Let on the step $j$ the polytope $P_j$ be obtained from $P_{j-1}$ by adding inequality $l_{S_j}x\leq b_j$, where $S_j=S_{j_1}\sqcup S_{j_2}, l_{S_j}=l_{S_{j_1}} + l_{S_{j_2}}, b_{S_j}=b_{S_{j_1}} + b_{S_{j_2}} - \varepsilon_j$. By the inductive assumption, vectors $l_{S_{j_1}},\ldots ,l_{S_{j_n}}$ form a $\mathbb{Z}$ basis of $\mathbb{Z}^n$ provided facets $F_{S_{j_1}},\ldots ,F_{S_{j_n}}$ intersect in $P_{j-1}$. The new vertexes of $P_j$ are intersections of facets $F_{S_j},F_{S_{j_1}},F_{S_{j_3}},\ldots ,F_{S_{j_n}}$ and $F_{S_j},F_{S_{j_2}},\ldots ,F_{S_{j_n}}$ such that facets $F_{S_{j_1}},F_{S_{j_2}},\ldots ,F_{S_{j_n}}$ intersect in $P_{j-1}$. Therefore,
\begin{multline*}
\det(l_{S_j},l_{S_{j_1}},l_{S_{j_3}},\ldots ,l_{S_{j_n}})=\det(l_{S_{j_1}}+l_{S_{j_2}},l_{S_{j_1}},l_{S_{j_3}},\ldots ,l_{S_{j_n}})=\\
=\det(l_{S_{j_2}},l_{S_{j_1}},l_{S_{j_3}},\ldots ,l_{S_{j_n}})=-\det(l_{S_{j_1}},l_{S_{j_2}},l_{S_{j_3}},\ldots ,l_{S_{j_n}})=\pm1.
\end{multline*}
The second case is similar.
\end{proof}

\begin{ex*}
Let us realize the regular 3-dimensional associhedron. Its building set is \\ $B=\{\{1\},\{2\},\{3\},\{4\},\{1,2\},\{2,3\},\{3,4\},\{1,2,3\},\{2,3,4\},\{1,2,3,4\}\}$. The building set $B_0\subset B$ that gives a cube consists of $\{i\},\{1,2\},\{3,4\},[4]$. On the first step, with respect to lemma \ref{step}, we pick $S=\{2,3\}\in B\setminus B_0$ and obtain the building set $B=B_1=\widehat{B_0\cup S}$. By lemma \ref{main}, associhedron $P_B$ is obtained from the cube $P_{B_0}=I^3$ by shavings faces $F_{\{1,2\}}\cap F_{\{3\}}, F_{\{2\}}\cap F_{\{3,4\}}, F_{\{2\}}\cap F_{\{3\}}$ in the written order.

\begin{tikzpicture}[scale=0.1]
    \tikzstyle{ann} = [inner sep=2pt]
   \draw(5, 35)--(5, 5)--(40, 5);
   \draw(61,25.5)--(61,57)--(49,48);
   \draw(9, 48)--(21, 57)--(61,57);
   \draw[dashed](5, 5)--(21, 17)--(21,57);
   \draw[dashed](21, 17)--(56,17);
   \draw(47,41.5)--(49,48)--(9,48)--(5,35)--(42.5,35);
   \draw(42.5,10)--(40, 5)--(56,17)--(61,25.5)--(47, 15);
   \draw(47,41.5)--(47, 15)--(42.5,10)--(42.5,35)--(47,41.5);
    \node[ann] at (35, 52) {$F_{12}$};
    \node[ann] at (35, 10){$F_{34}$};
    \node[ann] at (25, 22){$F_3$};
    \node[ann] at (55, 35){$F_2$};
\end{tikzpicture}
Here $P_{B_0}$ is the standard cube $I^3$. Its left and right facets are $F_{\{1\}}$ and $F_{\{2\}}$, its front and back facets are $F_{\{3\}}$ and $F_{\{4\}}$, its top and bottom facets are $F_{\{1,2\}}$ and $F_{\{3,4\}}$. The top and bottom sections are $F_{\{1,2,3\}}$ and $F_{\{2,3,4\}}$. The vertical section is $F_{\{2,3\}}$.
\end{ex*}

\bibliographystyle{amsplain}

\begin{thebibliography}{}
\bibitem{B}
V.~M.~Buchstaber. Ring of simple polytopes and differential equations. Trudy Matematicheskogo Instituta imeni V.A. Steklova, 2008, Vol. 263, pp. 18–43.
ò.263, ñ.1-26.
\bibitem{B1}
V.~M.~Buchstaber. Lectures on Toric Topology. Toric
Topology Workshop, KAIST 2008, Trends in Mathematics, Information
Center for Mathematical Sciences, V.~11, N.~1, 2008, 1--55.
\bibitem{BL}
L.~J.~Billera,C.~W.~Lee. A proof of the sufficiency of McMullen's conditions for f - vectors of simplicial polytopes. J. Combin. Theory Ser. A 31, 237--255 (1981)
\bibitem{BP}
V.~M.~Buchstaber and T.~E.~Panov. Torus actions and their applications in topology and combinatorics. University Lecture Series, 24. American Mathematical Society, Providence, RI, 2002.
\bibitem{CD}
R.~Charney, M.~Davis. The Euler characteristic of a nonpositively curved, piecewise Euclidean manifold. Pacific J. Math. 171 (1995), no. 1, 117--137.
\bibitem{DJ}
M.~W.~Davis, T.~Januszkiewicz. Convex polytopes, Coxeter orbifolds and torus actions. Duke Math. J., 62:2 (1991), 417–451.
\bibitem{Er}
N.~Yu.~Erokhovets. Gal's Conjecture for Nestohedra Corresponding to Complete Bipartite Graphs. Geometry, Topology, and Mathematical Physics. II, Collected papers. Dedicated to Academician Sergei Petrovich Novikov on the occasion of his 70th birthday, Tr. Mat. Inst. Steklova, 266, MAIK Nauka/Interperiodica, Moscow, 2009, 127–139.
\bibitem{Fenn}
A.~G.~Fenn. Generating functions of nestohedra and applications. arXiv:0908.0605v1 [math.CO] 5 Aug 2009.
\bibitem{FM}
E.-M.~Feichtner, I.~Mueller. On the topology of nested set complexes. arXiv: math/0311430v1 [math.CO] 25 Nov 2003.
\bibitem{Gal}
S.~R.~Gal. Real root conjecture fails for five- and higher-dimensional spheres. Discrete Comput. Geom. 34 (2005), no.2, P. 269-284.
\bibitem{Mc89}
P.~McMullen. The polytope algebra. Advances in Math. 78 (1989), 76-130.
\bibitem{Mc93}
P.~McMullen. On simple polytopes. Invent. Math. 113 (1993), 419--444.
\bibitem{PRW}
A.~Postnikov, V.~Reiner, L.~Williams. Faces of generalized permutohedra. arXiv:math/0609184v2 [math.CO] 18 May 2007.
\bibitem{Post}
A.~Postnikov. Permutohedra, associahedra, and beyond. arXiv:math/0507163v1 [math.CO] 7 Jul 2005.
\bibitem{St}
R.~Stanley. The number of faces of simplitial convex polytope. Advances in Math. 1980. V.35, ¹3, P.236-238.
\bibitem{Zel}
A.~Zelevinsky. Nested set complexes and their polyhedral realizations. Pure and Applied Mathematics Quarterly 2(2006), P.
655-671.
\bibitem{Z}
G.~Ziegler. Lectures on Polytopes. New York : Springer-Verlag, 1995. (Graduate Texts in Math. V.152).
\end{thebibliography}

\end{document}